\documentclass{article}
\usepackage{graphicx}
\usepackage{amsmath, amssymb, amsthm}
\usepackage[colorlinks=true, allcolors=blue]{hyperref}
\usepackage{verbatim}
\usepackage{mathtools, mathrsfs}
\usepackage{enumitem}
\usepackage{cleveref}
\usepackage{tikz}
\usepackage{tkz-euclide}
\usepackage[normalem]{ulem}

\theoremstyle{plain}

\newtheorem{lemma}{Lemma}
\newtheorem{corollary}{Corollary}
\newtheorem{theorem}{Theorem}

\theoremstyle{definition}
\newtheorem{definition}{Definition}

\theoremstyle{remark}

\title{Preserving and Increasing Symmetries of Polyhedral Maps}
\author{Gunnar Brinkmann, Fabio Buccoliero, Heidi Van den Camp}
\date{}

\begin{document}

\maketitle

\section*{Abstract}

In this article we investigate the question which local symmetry preserving operations can not only preserve, but also increase the symmetry of a polyhedral
map.  Often operations that can increase symmetry, can nevertheless not do so for polyhedral maps of every genus. So for maps that can increase symmetry,
we also investigate for which genera they can do so. We give complete answers for operations with inflation factor at most 6 (that is:
that increase the number of edges by a factor of at most 6) and for the chemically relevant Goldberg-Coxeter operations and the leapfrog operation.

\section*{Introduction}

Polyhedral maps and operations on polyhedra became especially relevant in chemistry after the discovery of fullerenes. For fullerenes, Goldberg-Coxeter
operations and the leapfrog operation were used to construct larger fullerenes from smaller ones -- preserving the symmetry group.  In the general framework that we
use, the leapfrog operation is in a fact a special Goldberg-Coxeter operation.  The result of a leapfrog operation applied to any fullerene -- no matter
of which symmetry -- not only has the same symmetry group, but also a closed shell \cite{Fullbook}, which makes this operation especially interesting and well
studied. Next to fullerene polyhedra, also higher genus analogues of Fullerenes have been studied (see e.g. \cite{highgenusfull}) and -- at least for the torus
-- even observed in nature \cite{torus97}. So it is interesting whether these operations also just preserve the symmetry on maps of higher genus. We will
answer this question even in a more general context -- that of {\em local symmetry preserving operations}.

Local symmetry preserving operations such as {\em truncation}, {\em ambo} or {\em dual} were most likely already known to the ancient Greeks, who described the
Platonic solids and the Archimedean solids which can be constructed from the Platonic solids by local operations preserving symmetries of the original
object. When rediscovering the Archimedean solids, also Kepler used (and named) such operations in his book {\em Harmonices Mundi} \cite{Kepler}.
Some operations not only preserve symmetries, but sometimes even increase it.
For classical polyhedra (that
is: $3$-dimensional convex polyhedra) the only known operation where this happens is ambo (or a combination of ambo with another operation) applied to self-dual
polyhedra.  For polyhedral maps on surfaces of higher genus, this can also happen with other operations, as shown in \cite{korbitmaps}. In 2017, a general
description of local symmetry preserving operations encompassing all known operations was given \cite{lspgocox}. That definition made it possible to give
complete lists of local symmetry preserving operations that increase the number of edges by a certain factor -- the {\em inflation factor}. In this article we determine on
which genera operations with a small inflation factor can increase the symmetry of a polyhedral map. Goldberg-Coxeter operations were independently -- and in
slightly different ways -- introduced by Goldberg \cite{goldberg1937} in a mathematical context and by Caspar and Klug \cite{caspar1962} in a biological
context. Later these operations also became relevant for chemistry, to construct all fullerenes with icosahedral symmetry. Goldberg-Coxeter operations are
described by two parameters and there is an infinite number of them. We determine for which parameters and genera they can increase the symmetry of a polyhedral map.

\section{Definitions}

The term \emph{polyhedron} is used in different ways in the literature. As the planar case is often special, we will use the term in the classical way only for
maps corresponding to $3$-dimensional convex polyhedra -- that is due to Steinitz' theorem: $3$-connected simple graphs embedded in the plane.
For the more
general case of a $3$-connected graph $G$ embedded in a $2$-dimensional surface $S$ of possibly higher genus such that the closure of every face (that is: a
component of $S\setminus G$) is a closed disk and the intersection of the closure of two faces of the map is connected, we will use the term {\em
  polyhedral map}.  An equivalent definition of polyhedral map is a 3-connected embedded graph of face-width -- also known as representativity -- at least 3
\cite{mohar1997face}.  The \emph{boundary} of a face $f$, denoted by $\partial f$, is the closed walk of a polyhedral map $P$ which is obtained intersecting $P$ and the
topological closure of $f$.  A \emph{rotation system} is the specification of a circular ordering of the edges incident at each vertex of a map.  There
is a one-to-one correspondence between homeomorphism classes of maps on oriented surfaces and rotation systems \cite{top_graph_theory}\cite{graphs_on_surfaces}.

The \emph{barycentric subdivision} $B_P$ of the polyhedral map $P$ is the 3-coloured map obtained from $P$ by adding a vertex in every face of $P$
and on every edge of $P$, and adding an edge between every vertex in a face and the vertices on the boundary of this face such that every face in $B_P$ is a
triangle. The original vertices of $P$ get colour 0, the vertices corresponding to edges of $P$ get colour 1, and the vertices corresponding to faces of $P$ get
colour 2. These colours refer to the dimensions of the corresponding parts of $P$. Every face of $B_P$ has exactly one vertex of each colour. We call such a
face a \emph{chamber}. In places where more than one map is considered, we write $P$-chamber for a chamber in $B_P$. Automorphisms of coloured maps
preserve colours. In an equivalent, but purely combinatorial way, the chambers are sometimes also called {\em flags} and defined as triples $(v,e,f)$, so that
the vertex $v$ is incident with the edge $e$ that is again incident with the face $f$. The correspondence with chambers of $B_P$ is obvious.

In \cite{lspgocox} a general definition of local symmetry-preserving operations is introduced. It includes all classical and individually defined operations, such as
dual, truncation, ambo, $\dots$ and allows to prove results for all such operations together. While \cite{lspgocox} also aims at a non-mathematical audience and uses
a more intuitive and geometric definition citing a construction of Goldberg, we will use the more general and more combinatorial definition from \cite{lopsp_polyhedrality}.
This definition has the advantage to not rely on the knowledge of periodic tilings of the plane. Knowing about periodic tilings of the plane, one can think of an lsp-operation as a
triangle that is cut out of the barycentric subdivision of a tiling of the plane such that its edges are on certain symmetry axes of the tiling.

\begin{definition}\label{def:lsp}
	Let $O$ be a 2-connected plane map with vertex set $V$, together with a colouring $c: V \rightarrow \{0,1,2\}$. One of the faces is called
        the outer face. This face contains three special vertices marked as $v_0$, $v_1$, and $v_2$. We say that a vertex $v$ has \emph{colour} $i$ if $c(v)=i$.
        This 3-coloured map $O$ is a \emph{local symmetry preserving operation}, lsp-operation for short, if the following properties hold:
	\begin{enumerate}
		\item Every inner face --- i.e.\ every face that is not the outer face --- is a triangle and is called a chamber.
		\item There are no edges between vertices of the same colour.
		\item For each vertex that is not in the outer face:
		\begin{align*}
			c(v)=1 &\Rightarrow deg(v)=4\\
		\end{align*}
		For each vertex $v$ in the outer face, different from $v_0$, $v_1$, and $v_2$:
		\begin{align*}
			c(v)=1 &\Rightarrow deg(v)=3\\
		\end{align*}
		and
		\[c(v_0),c(v_2)\neq 1\]
		\begin{align*}
			c(v_1)=1 &\Rightarrow deg(v_1)=2\\
		\end{align*}
	\end{enumerate}
\end{definition}

\begin{figure}
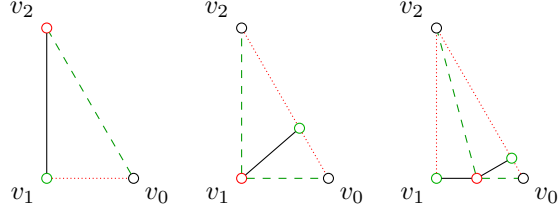

  \centering
    \input{deco_dual.tikz}
    \input{deco_ambo.tikz}
    \input{deco_truncation.tikz}
    \caption{From left to right: the c3-lsp-operations dual (from now on denoted as $D$), ambo (denoted as $A$) and truncation (denoted as $T$). Edges are assigned the unique colour that none of the
      end vertices has. The colours $0,1,2$ of vertices and edges are in this and the following figures represented as red, green, black in this order. So a red vertex is a vertex of colour $0$.}
    \label{fig:DAT}
\end{figure}

To apply an lsp-operation $O$ to a polyhedral map $P$, first take the barycentric subdivision $B_P$ of $P$. Then -- depending on the orientation of the chamber
-- a copy of $O$ or the mirror image of $O$ is glued into each chamber, identifying each vertex of colour $i$ with a copy of $v_i$ and replacing each edge
between vertices of colours $i$ and $j$ by a copy of the path between $v_i$ and $v_j$ in the outer face of $O$. The result of this gluing is a 3-coloured map
that is the barycentric subdivision of a map $O(P)$ \cite{lopsp_polyhedrality}, which is the \emph{result} $O(P)$ of applying $O$ to $P$.  Lsp-operations do not
change the genus of the polyhedral map they are applied to, as a disc is just replaced by another, subdivided, disc. In general the result of applying an
lsp-operation to a polyhedral map need not be polyhedral, e.g. if the operation has an internal $2$-cut. In \cite{lopsp_polyhedrality} it is proven that the
result of applying an lsp-operation $O$ to a specific polyhedral map $P$ is polyhedral if and only if for all polyhedral maps $P'$ the result $O(P')$ is
polyhedral. These operations are called {\em c3-lsp-operations}. So all operations transforming polyhedra to polyhedra, especially all
well known and relevant lsp-operations (e.g. the ones named by Kepler or Conway) are
c3-lsp-operations. Using the approach via tilings again, one can think of a c3-lsp-operation as a
triangle that is cut out of the barycentric subdivision of a 3-connected tiling of the plane such that its edges are on certain symmetry axes of the tiling.

For examples of c3-lsp-operations see Figure~\ref{fig:DAT}.

The \emph{inflation factor} of a c3-lsp-operation
$O$ is the ratio between the number of edges after applying the operation $O$ and the number of edges before  applying $O$.
This is equal to the ratio between the numbers of chambers after and before the operation and therefore equal to the number of chambers in $O$ \cite{lspgocox}.

\begin{definition}
  Let $x$ be a vertex, edge or chamber of $B_{O(P)}$. Then $x$ is a copy of a vertex, edge or chamber $y$ of $O$.  Let $\pi$ be the
    map that maps $x$ to $y$. The set of vertices, edges or chambers of $B_{O(P)}$ mapped to $\pi(x)$ is called the \emph{class} $\pi(x)$. 
\end{definition}

If an automorphism $\varphi$ of $B_P$ maps a chamber $C$ to a chamber $C'$, then the function mapping a chamber $C_O$ in the copy glued into $C$ to
the chamber $C'_O$ of the same class in the copy glued into $C'$ defines an automorphism of $B_{O(P)}$, that we call the {\em induced} automorphism.
Obviously all induced automorphisms are different.
 
\begin{definition}
    Let $P$ be a polyhedral map and let $O$ be a c3-lsp-operation. The submap $B^S_{O(P)}$ of $B_{O(P)}$ is the submap of $B_{O(P)}$ that consists of all the
    vertices and edges that are mapped to vertices and edges in the outer face of $O$ by $\pi$. $B^S_{O(P)}$ is isomorphic to a subdivision of $B_P$.
\end{definition}

The symmetry (or automorphism) group of a (possibly coloured) map $P$ is denoted as $\mathrm{Aut}(P)$. We consider the automorphisms -- that is: the elements of
the group -- as permutations of the set of vertices with an obvious induced action on the set of edges and faces of $P$.

There is a natural isomorphism between the groups $\mathrm{Aut}(P)$ and $\mathrm{Aut}(B_P)$ mapping a permutation $\varphi \in \mathrm{Aut}(P)$ to a
permutation $\varphi_B\in \mathrm{Aut}(B_P)$ that maps a vertex $v$ with colour $0$ to $\varphi(v)$, a vertex representing the edge $e$ to the vertex
representing the edge $\varphi(e)$ and a vertex representing the face $f$ to the vertex
representing the face $\varphi(f)$.
In most proofs we will work with  $\mathrm{Aut}(B_P)$.
As $B^S_{O(P)}$ is just $B_P$ with its edges subdivided -- and edges between
vertices of the same colour subdivided in the same way -- $\varphi \in \mathrm{Aut}(P)$ also induces an automorphism of $B^S_{O(P)}$ and therefore of $B_{O(P)}$ and $O(P)$, showing that 
$|\mathrm{Aut}(P)|\leq |\mathrm{Aut}(O(P))|$.  Given a c3-lsp-operation $O$ and a polyhedral map $P$ such that $|\mathrm{Aut}(P)| < |\mathrm{Aut}(O(P))|$, we
say that $O$ \emph{increases the symmetry} of $P$. So $O$ \emph{increases the symmetry} of $P$ if and only if there is an automorphism of $O(P)$ that is not induced
by an automorphism of $P$.
We say that a c3-lsp-operation $O$ \emph{can increase symmetry in genus $g$} if there exists a polyhedral map
of genus $g$ such that $O$ increases the symmetry of $P$.

\section{General results}

In this section we will show some general results that will be used in the rest of the paper.  Obviously the operation dual exactly preserves the
symmetries of a polyhedral map.  Interpreted as acting on the barycentric subdivision, it just interchanges the colours $0$
and $2$, so each automorphism of the dual is also an automorphism of the original map. However, as the dual is its own inverse and it preserves symmetries,
if the dual map had a larger automorphism group than the original map, then taking the dual again would imply that the original map has a larger automorphism group than itself.

Lemma~\ref{lem:dual_operations} implies that when studying which
c3-lsp-operations can increase the symmetry of polyhedral maps of a certain genus, it is sufficient to decide this question for either the operation itself or
an arbitrary combination with the operation dual: one can increase the symmetry on this genus if and only if the other can. C3-lsp-operations preserve polyhedrality \cite{lopsp_polyhedrality}, so
that the dual of a polyhedral map is also a polyhedral map.

For two operations $O,O'$ we write $(O\circ O')$ for the operation that transforms a map $P$ into the map $O(O'(P))$. The operation $(O\circ O')$ can also be
described as -- similar to applying it to a map that already has a barycentric subdivision -- gluing a copy of $O$ or its mirror image into every chamber of $O'$, ignoring the outer face. 

\begin{lemma}\label{lem:dual_operations}
  Let $O$ be a c3-lsp-operation and $P$ a polyhedral map. Then the following three statements are equivalent: 

  \begin{enumerate}

   \item $|\mathrm{Aut}(O(P))| > |\mathrm{Aut}(P)|$ (that is: $O$ increases the symmetry of $P$)

  \item $|\mathrm{Aut}((D \circ O)(P))| > |\mathrm{Aut}(P)|$ (that is: $D \circ O$ increases the symmetry of $P$)

  \item $|\mathrm{Aut}((O \circ D)(D(P)))| > |\mathrm{Aut}(D(P))|$ (that is: $O \circ D$ increases the symmetry of $D(P)$).

    \end{enumerate}

\end{lemma}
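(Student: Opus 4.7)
The plan is to use two facts about the dual operation $D$ that are already established in the preceding paragraph: first, that $D$ preserves symmetries exactly, i.e. $|\mathrm{Aut}(D(Q))| = |\mathrm{Aut}(Q)|$ for every polyhedral map $Q$, and second, that $D$ is an involution on polyhedral maps, i.e. $D(D(Q)) = Q$. Both (1)$\Leftrightarrow$(2) and (1)$\Leftrightarrow$(3) will then reduce to a substitution.

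First I would prove (1)$\Leftrightarrow$(2). Applying the symmetry-preservation property with $Q = O(P)$ gives $|\mathrm{Aut}((D\circ O)(P))| = |\mathrm{Aut}(D(O(P)))| = |\mathrm{Aut}(O(P))|$. Since the right-hand sides of the two inequalities compare the same quantity $|\mathrm{Aut}(P)|$ against equal left-hand sides, (1) and (2) are literally the same inequality. No further work is needed.

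Next I would prove (1)$\Leftrightarrow$(3). Here I would unwind the composition: by definition of $\circ$, we have $(O\circ D)(D(P)) = O(D(D(P)))$, and by involutivity of the dual this equals $O(P)$. So the left-hand side of (3) is $|\mathrm{Aut}(O(P))|$. On the right-hand side, symmetry preservation gives $|\mathrm{Aut}(D(P))| = |\mathrm{Aut}(P)|$. Substituting both identities, (3) becomes $|\mathrm{Aut}(O(P))| > |\mathrm{Aut}(P)|$, which is exactly (1).

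The only point requiring a slight justification is that the dual really does preserve the automorphism group exactly — but this is already argued in the paragraph immediately preceding the lemma (the dual just swaps the colours $0$ and $2$ on the barycentric subdivision, so $\mathrm{Aut}(B_P) = \mathrm{Aut}(B_{D(P)})$ and then the natural isomorphism $\mathrm{Aut}(P)\cong \mathrm{Aut}(B_P)$ finishes it), and it may be invoked as given. The main (small) obstacle is just keeping the order of composition straight when rewriting $(O\circ D)(D(P))$; once one commits to the convention $(O\circ O')(P) = O(O'(P))$ stated just before the lemma, the rest is bookkeeping.
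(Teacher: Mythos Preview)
Your proposal is correct and follows essentially the same approach as the paper's proof: both use that $D$ exactly preserves the automorphism group to get (1)$\Leftrightarrow$(2), and that $D\circ D$ is the identity together with $|\mathrm{Aut}(D(P))|=|\mathrm{Aut}(P)|$ to get (1)$\Leftrightarrow$(3). Your write-up is, if anything, slightly more explicit than the paper's about where each identity comes from.
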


\begin{proof} 
  $1. \Leftrightarrow 2.$: This is immediate as $|\mathrm{Aut}(O(P))| = |\mathrm{Aut}((D \circ O)(P))|$

  $1. \Leftrightarrow 3.$:  As $D\circ D$ is the identity operation, we have that $|\mathrm{Aut}((O \circ D)(D(P)))| = |\mathrm{Aut}(O(P))|$
  and as $ |\mathrm{Aut}(D(P))|=|\mathrm{Aut}(P)|$ we get the equivalence.

\end{proof}

The following two lemmas are well known, but mentioned for completeness and later use.

\begin{lemma}\label{lem:facesize5_plane}
    Every (plane) polyhedron has a face of size at most 5.
\end{lemma}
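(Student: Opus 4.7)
The plan is to use Euler's formula for the sphere, namely $V-E+F=2$, together with the two standard lower bounds that come from polyhedrality: every face has size at least $3$ and every vertex has degree at least $3$ (this is implied by $3$-connectedness). The lemma is classical, so I would only sketch the double-counting argument and derive a contradiction from the assumption that every face has size at least $6$.

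First I would suppose for contradiction that every face of the (plane) polyhedral map $P$ has size at least $6$. Counting incidences face-by-edge gives $2E=\sum_{f}|\partial f|\geq 6F$, hence $F\leq E/3$. Counting incidences vertex-by-edge and using $\deg(v)\geq 3$ for all $v$ (since $P$ is $3$-connected) gives $2E=\sum_{v}\deg(v)\geq 3V$, hence $V\leq 2E/3$.

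Plugging these two bounds into Euler's formula $V-E+F=2$, I obtain
\[
2 \;=\; V-E+F \;\leq\; \tfrac{2E}{3}-E+\tfrac{E}{3} \;=\; 0,
\]
which is absurd. Consequently, at least one face must have size at most $5$.

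The proof is completely routine; the only point that requires any care is the justification of $\deg(v)\geq 3$, which follows from $3$-connectedness in the definition of polyhedral map used in the paper. There is no real obstacle here — the lemma is stated only for completeness and later use.
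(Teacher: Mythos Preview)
Your argument is correct and is exactly the standard Euler-formula double count. The paper itself does not give a proof of this lemma at all: it is introduced with the remark that it is ``well known, but mentioned for completeness and later use,'' and then stated without proof. So there is nothing to compare against; your sketch is precisely the classical justification one would supply if asked.
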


\begin{lemma}\label{lem: the action on the chambers is free}
  Let $P$ be a polyhedral map.  Then $\mathrm{Aut}(P)=\mathrm{Aut}(B_P)$ and
 $\mathrm{Aut}(P)$ acts freely on the set of chambers of $B_P$, so the image of a single chamber determines the whole automorphism.
\end{lemma}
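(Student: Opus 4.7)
The plan is to split the lemma into its two assertions and handle them separately. For the first assertion $\mathrm{Aut}(P)=\mathrm{Aut}(B_P)$, the preamble to the lemma already exhibits the natural homomorphism $\varphi\mapsto\varphi_B$, so it remains to verify it is an isomorphism. Injectivity is immediate: $\varphi_B$ restricted to colour-$0$ vertices coincides with $\varphi$ (under the identification of $V(P)$ with the colour-$0$ vertex class of $B_P$), so $\varphi_B=\mathrm{id}$ forces $\varphi=\mathrm{id}$. For surjectivity, let $\psi\in\mathrm{Aut}(B_P)$. Because $\psi$ is colour-preserving, it restricts to bijections of the three colour classes, which are canonically in bijection with the vertices, edges, and faces of $P$. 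Since adjacency of a colour-$0$ vertex and a colour-$1$ vertex in $B_P$ encodes vertex-edge incidence in $P$ (and similarly for the other colour pairs), these restrictions assemble into an automorphism $\varphi\in\mathrm{Aut}(P)$ with $\varphi_B=\psi$.

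For the freeness assertion, suppose $\varphi\in\mathrm{Aut}(B_P)$ fixes a chamber $C$ setwise. The three vertices of $C$ have pairwise distinct colours and $\varphi$ preserves colours, so each vertex of $C$ is fixed pointwise. The core of the argument is a propagation along edge-adjacencies of chambers. Let $C'$ be any chamber sharing an edge $e_B$ with $C$. In the triangulated map $B_P$, the edge $e_B$ lies on exactly two inner faces, namely $C$ and $C'$. Since $\varphi$ permutes chambers, sends $e_B$ to itself, and fixes $C$, it must fix $C'$ as well; but then the unique vertex of $C'$ outside $C$ is also fixed (its two companions in $C'$ are shared with $C$ and are already fixed, and $\varphi$ is injective on the colour class to which that third vertex belongs).

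Iterating this step along the chamber-adjacency graph, which is connected because $B_P$ is a connected map with only triangular inner faces, $\varphi$ fixes every vertex of $B_P$ and is therefore the identity. This establishes that $\mathrm{Aut}(B_P)$ acts freely on the set of chambers; the final clause of the lemma is then immediate, for if $\varphi_1$ and $\varphi_2$ agree on some chamber $C$, then $\varphi_2^{-1}\varphi_1$ fixes $C$ and is the identity by freeness, so $\varphi_1=\varphi_2$. There is no real obstacle in this plan; the one point deserving mild care is the surjectivity step of part one, where one must check that the three colour-class restrictions of $\psi$ combine coherently — that is, that $\psi$'s preservation of $B_P$-adjacency indeed translates into simultaneous preservation of vertex-edge, edge-face, and vertex-face incidences in $P$.
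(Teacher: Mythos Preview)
Your proof is correct and follows the standard argument for this well-known fact. The paper itself does not supply a proof: it introduces this lemma (together with the preceding one) with the remark that they ``are well known, but mentioned for completeness and later use,'' and states the lemma without justification. Hence there is no paper proof to compare against. One cosmetic point: you speak of the edge $e_B$ lying on ``exactly two inner faces'' of $B_P$, but $B_P$ is a map on the same closed surface as $P$, so there is no outer face---every edge of $B_P$ simply lies on exactly two chambers. This does not affect the validity of the propagation step.
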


\begin{corollary}\label{cor:sym_uncol_B}
  Let $P$ be a polyhedral map, $B^u_P$ the barycentric subdivision with the colours removed and
  $\mathrm{S_i}(B_P) $ the set of automorphisms of $B^u_P$ interchanging the former colours $0$ and $2$ and respecting colour $1$ (which can be interpreted as maps onto the dual of $P$).
  
  Then (with $\mathrm{Aut}(B_P)$ also considered just as a set) the elements of the group $\mathrm{Aut}(B^u_P) $ are exactly the permutations in
  $\mathrm{Aut}(B_P) \cup \mathrm{S_i}(B_P)$.
\end{corollary}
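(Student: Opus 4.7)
The plan is to show the non-trivial inclusion $\mathrm{Aut}(B^u_P) \subseteq \mathrm{Aut}(B_P)\cup \mathrm{S_i}(B_P)$; the reverse inclusion is immediate from the definitions. So I must show that every automorphism $\varphi$ of $B^u_P$ either preserves all three colour classes, or swaps classes $0$ and $2$ while preserving class $1$.

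First I would pin down colour class $1$ via a degree argument. In $B_P$, a colour-$1$ vertex corresponds to an edge of $P$ and has exactly $4$ neighbours (its two endpoints and the two incident faces), so it has degree $4$. A colour-$0$ vertex of degree $d$ in $P$ has degree $2d$ in $B_P$, and a colour-$2$ vertex coming from a face of size $k$ has degree $2k$. Since $P$ is polyhedral we have $d,k\geq 3$, so colour-$0$ and colour-$2$ vertices all have degree at least $6$. Therefore the colour-$1$ vertices are exactly the degree-$4$ vertices, and any $\varphi\in\mathrm{Aut}(B^u_P)$ must send colour-$1$ vertices to colour-$1$ vertices.

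Next I would argue that on the remaining two classes, $\varphi$ acts "coherently" on the whole map. Every triangular face of $B^u_P$ is a chamber of $B_P$ and hence contains one vertex of each colour. Since $\varphi$ permutes chambers and preserves colour $1$, for each chamber $C=\{u_0,u_1,u_2\}$ (subscripts indicating colours) exactly one of two things happens: either $\varphi(u_0)$ has colour $0$ and $\varphi(u_2)$ has colour $2$ (\emph{type P}, preserving), or $\varphi(u_0)$ has colour $2$ and $\varphi(u_2)$ has colour $0$ (\emph{type S}, swapping). I would then check that any two chambers $C,C'$ sharing an edge are of the same type: the shared edge has two endpoints whose colours are determined (the pair $\{0,1\}$, $\{1,2\}$, or $\{0,2\}$), and in each of the three cases the images of these shared vertices already determine the type on both sides of the edge. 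Because the dual graph on chambers (two chambers adjacent iff they share an edge of $B^u_P$) is connected, the type is globally constant.

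If $\varphi$ is everywhere of type P, then $\varphi$ preserves all three colour classes, so $\varphi\in\mathrm{Aut}(B_P)$; if $\varphi$ is everywhere of type S, then $\varphi$ interchanges colour classes $0$ and $2$ and fixes class $1$, so $\varphi\in\mathrm{S_i}(B_P)$. The main obstacle, and the only step requiring care, is the edge-by-edge consistency check that establishes global constancy of the type; everything else is bookkeeping on degrees in $B_P$.
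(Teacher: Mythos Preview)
Your argument is correct and follows essentially the same approach as the paper: the degree argument fixing colour class~$1$ is identical, and your propagation step via adjacent chambers plays the same role as the paper's propagation along paths in the subgraph of $B_P$ induced by the colour-$0$ and colour-$2$ vertices (which is connected and bipartite, so one swap forces a global swap). The two connectivity arguments are minor variants of each other.
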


\begin{proof}
  It is enough to show that an automorphism $\varphi$ of $B^u_P$ that is not in $\mathrm{Aut}(B_P)$ interchanges vertices of former colour $0$ and $2$ and respects colour
  $1$ and therefore is in $\mathrm{S_i}(B_P)$.

  As $P$ is 3-connected, every vertex in $B_P$ of colour $0$ or $2$ has degree at least $6$. Vertices of colour $1$ always have degree $4$. Therefore $\varphi$
  sends vertices of colour $1$ to vertices of colour $1$. As $\varphi$ does not preserve colours, it needs to send at least one vertex of colour $0$ to a vertex
  of colour $2$. $B_P$ is a connected tri-partite map, with partitions given by the colours. In fact there is a path between any two vertices of colour $0$ or
  $2$ not containing a vertex of colour $1$. If $\varphi$ sends a vertex of colour $0$ to one of colour $2$, then it interchanges colours along all such paths,
  so all vertices of colour $0$ are sent to vertices of colour $2$. Therefore, $P$ is self-dual and this proves the statement.
\end{proof}

From the previous proof we obtain that $|\mathrm{Aut}(P) |< |\mathrm{Aut}(B^u_P)|$ if and only if $P$ is self-dual, because there is an automorphism of $B^u_P$
which sends each vertex of $P$ to a vertex of the dual.

\begin{theorem}\label{thm: automorphism preserving subdivision is ambo}
    Let $P$ be a polyhedral map and let $O$ be a c3-lsp-operation such that every automorphism in $\mathrm{Aut}(B_{O(P)})$ maps $B^S_{O(P)}$ to $B^S_{O(P)}$. If
    $O$ increases the symmetry of $P$, then $P$ is a self-dual polyhedral map and $O= X\circ A$, with $A$ the ambo operation and $X$ a c3-lsp-operation.
\end{theorem}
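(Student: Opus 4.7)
Since $O$ increases symmetry, there is $\varphi\in\mathrm{Aut}(B_{O(P)})$ not induced by any automorphism of $P$; by hypothesis $\varphi$ preserves $B^S_{O(P)}$. The plan is to restrict $\varphi$ to $B^S_{O(P)}$, analyze its action on the corners (which coincide with $V(B_P)$) using the two relevant colourings --- the original $B_P$-colouring and the $B_{O(P)}$-colouring given by $c(v_0),c(v_1),c(v_2)$ --- and then cut $O$ along the fixed axis of the resulting reflection to obtain the ambo factorization.

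A degree count in $B^S_{O(P)}$ (corners of colours $c(v_0)$ and $c(v_2)$ have degree $\geq 6$, corners of colour $c(v_1)$ have degree $4$, and all other vertices have degree $2$) shows that $\varphi$ sends corners to corners. If the restriction to the corners also preserved the $B_P$-colouring, it would give an automorphism of $B_P$, hence of $P$; the associated induced automorphism of $B_{O(P)}$ would agree with $\varphi$ on $B^S_{O(P)}$, and by the free action of $\mathrm{Aut}(O(P))$ on the chambers of $B_{O(P)}$ (Lemma~\ref{lem: the action on the chambers is free}) it would agree with $\varphi$ everywhere, contradicting the choice of $\varphi$. So $\varphi$ does not preserve the $B_P$-colouring on corners, and Corollary~\ref{cor:sym_uncol_B} applied to that restriction shows that $P$ is self-dual and the induced permutation swaps $B_P$-colours $0$ and $2$ while fixing $1$. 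Compatibility with the $B_{O(P)}$-colouring then forces $c(v_0)=c(v_2)\in\{0,2\}$, and by Lemma~\ref{lem:dual_operations} I may pass to $D\circ O$ if needed and assume $c(v_0)=c(v_2)=2$.

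Restricted to a single copy of $O$ glued into a chamber, $\varphi$ now acts as a colour-preserving reflection swapping $v_0$ with $v_2$ and fixing $v_1$. Its fixed axis must meet the outer $v_0$-$v_2$ path in a vertex $m$ rather than in the interior of an edge, since adjacent outer-face vertices carry distinct colours and so cannot be swapped by a colour-preserving map; thus the axis is a path from $v_1$ to $m$ through $O$. Cutting $O$ along this axis produces two mirror-image halves, and identifying each half with a c3-lsp-operation $X$ glued into one of the two chambers of ambo, with $m$ playing the role of ambo's middle vertex $e$, yields $O=X\circ A$.

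The main obstacle is this last reassembly: one must verify that the cut-open half is a bona fide c3-lsp-operation satisfying all axioms of Definition~\ref{def:lsp}, and, crucially, that $c(v_1)=0$ and $c(m)=1$ so that $O$'s outer face really fits the $X\circ A$ template (ambo's middle vertex has colour $1$ and its $v_1$ has colour $0$). This is where the degree restrictions on colour-$1$ outer-face vertices, the palindromic structure of the $v_0$-$v_2$ path forced by the reflection, and the reduction $c(v_0)=c(v_2)=2$ must combine to pin down the ambo template.
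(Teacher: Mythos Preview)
Your overall line is the paper's: restrict a non-induced $\varphi$ to $B^S_{O(P)}$, use Corollary~\ref{cor:sym_uncol_B} to get self-duality and the swap of $B_P$-colours $0$ and $2$, deduce a colour-preserving reflection of $O$ exchanging $v_0$ and $v_2$, and cut along its fixed axis $M$. You are more explicit than the paper about why corners go to corners and why $\hat\varphi\in\mathrm{Aut}(B_P)$ would force $\varphi$ to be induced; the paper leaves both implicit. The detour through Lemma~\ref{lem:dual_operations} to arrange $c(v_0)=c(v_2)=2$ is valid but unnecessary: the reflection already forces $c(v_0)=c(v_2)$, and the lsp axioms give $c(v_0),c(v_2)\neq 1$, which is all that is used.

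The genuine problem is in your final paragraph. You assert that one must verify $c_O(v_1)=0$ and $c_O(m)=1$ because those are ambo's colours at the corresponding positions. This misreads composition: in $X\circ A$ the vertex colours come from $X$, not from $A$. Concretely, bevel $=T\circ A$ has $c(v_0)=c(v_1)=c(v_2)=2$, so $c(v_1)=0$ is simply false in general. What must actually be checked is that the half, with new special vertices $v_0'=v_1$, $v_1'=m$, $v_2'=v_0$ (say), satisfies Definition~\ref{def:lsp}. The only nontrivial items are (i)~$c_O(v_1)\neq 1$ and (ii) the outer-face degree constraints for colour-$1$ vertices lying on $M$. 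For (i): if $c_O(v_1)=1$ then $\deg_O(v_1)=2$, so $v_1$ lies in a single chamber whose other two vertices have colours $0$ and $2$; the colour-preserving reflection swaps them, a contradiction. For (ii): an interior vertex $w$ of $O$ on $M$ with $c(w)=1$ has $\deg_O(w)=4$, two of those edges lie on $M$ and the other two are exchanged by the reflection, giving $\deg_X(w)=3$ as required; and if $c_O(m)=1$ then $\deg_O(m)=3$ with one edge on $M$ and two outer edges exchanged, so $\deg_X(m)=2$. The paper compresses all of this into ``can easily be checked''; your task is the same verification, just aimed at the correct targets.
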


\begin{proof}
    Let $\varphi$ be an automorphism of $B_{O(P)}$ which is not induced by an automorphism of $P$. As an automorphism of $B_{O(P)}$ it is colour-preserving. As
    $\varphi$ maps $B^S_{O(P)}$ to $B^S_{O(P)}$, it induces an automorphism $\hat{\varphi}$ of $B^u_P$. This automorphism $\hat{\varphi}$ sends each vertex of
    colour $0$ to a vertex of colour $2$ by \Cref{cor:sym_uncol_B}. As the barycentric subdivision of the dual of $P$ is just $B_P$ with the colours $0$ and $2$
    switched, $\hat{\varphi}$ can be seen as an isomorphism between $P$ and its dual. It follows that $P$ is self-dual.
    
    As an automorphism of $B_{O(P)}$, $\varphi$ maps $B^S_{O(P)}$ to $B^S_{O(P)}$. Our previous argument implies that the boundary of a face in $B^S_{O(P)}$ is
    mapped to the boundary of a face in $B^S_{O(P)}$ in such a way that the vertices corresponding to vertices of colour 0 in $B_P$ are mapped to vertices
    corresponding to vertices of colour 2 in $B_P$ and the other way around. Vertices that are of colour 1 in $B_P$ are not mapped to another colour. This
    implies that $\varphi$ maps each copy of $O$ to a mirrored copy of $O$. More specifically, $O$ must be mirror symmetric with respect to a path joining $v_1$
    with the {\em midpoint} between $v_0$ and $v_2$ in the outer face. The boundary path between $v_0$ and $v_2$ contains an odd number of vertices, so that a
    midpoint $v$ and also a path $M$ between $v_1$ and $v$ stabilized by the mirror symmetry exists, as no chamber can be mapped to itself by a mirror symmetry
    because of the colouring. The fact that each of the two parts into which $M$ splits the operation is in fact an lsp-operation (in fact one the mirror image of the other) follows for the vertices not on
    the boundary between the two parts directly from the fact that $O$ is an lsp-operation. The conditions for the other vertices and the special vertices can
    easily be checked. The fact that they are c3-lsp-operations follows from the fact that $O(P)$ is polyhedral for a polyhedral map $P$ and $O(P)=X(A(P))$ with $X$ the lsp-operation
    defined by one of the parts.
\end{proof}

\pagebreak[2]

\begin{lemma}\label{lem:map_chamber_to_same_class}
    Let $P$ be a polyhedral map and $O$ a c3-lsp-operation. The following are equivalent:
    \begin{enumerate}
    \item The operation $O$ increases the symmetry of $P$, that is: \\
      $|\mathrm{Aut}(B_{O(P)})|>|\mathrm{Aut}(B_P)|$.
        \item There exists an automorphism of $B_{O(P)}$ that maps a chamber to a chamber in a different class.
        \item There exists an automorphism of $B_{O(P)}$ that maps every chamber to a chamber in a different class.
    \end{enumerate}
\end{lemma}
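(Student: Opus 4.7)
The plan is to establish the cyclic chain $(3)\Rightarrow(2)\Rightarrow(1)\Rightarrow(3)$. The implication $(3)\Rightarrow(2)$ is immediate. For $(2)\Rightarrow(1)$, the key observation is that, by the very definition of induced automorphism, each induced automorphism sends every chamber of $B_{O(P)}$ to a chamber in the same class. Hence an automorphism witnessing $(2)$ is not induced, and since the $|\mathrm{Aut}(B_P)|$ induced automorphisms are pairwise distinct, we conclude $|\mathrm{Aut}(B_{O(P)})|>|\mathrm{Aut}(B_P)|$.

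For $(1)\Rightarrow(3)$, I fix a non-induced $\varphi\in\mathrm{Aut}(B_{O(P)})$ (which exists by $(1)$) and consider $F_\varphi:=\{C : \pi(\varphi(C))=\pi(C)\}$. The aim is to show $F_\varphi=\emptyset$, so that $\varphi$ itself witnesses $(3)$. The key step is a local propagation claim: if chambers $C$ and $D$ of $B_{O(P)}$ share an edge of colour $\alpha$, then $\pi(D)$ is determined by $\pi(C)$ and $\alpha$. Indeed, either the colour-$\alpha$ edge of $\pi(C)$ lies in the interior of $O$, in which case $\pi(D)$ is the unique chamber of $O$ on the other side, or it lies on the outer face of $O$, in which case $\pi(D)$ is determined by the uniform gluing rule that attaches the copies of $O$ in adjacent chambers of $B_P$ as a direct/mirror pair. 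Since $\varphi$ is colour-preserving, $\varphi(C)$ and $\varphi(D)$ share an edge of the same colour $\alpha$, so the same rule gives $\pi(\varphi(D))$ from $\pi(\varphi(C))$ and $\alpha$. Hence $C\in F_\varphi$ forces $D\in F_\varphi$, and $F_\varphi$ is closed under chamber adjacency.

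Since the chamber-adjacency graph of $B_{O(P)}$ is connected, $F_\varphi$ is either empty or all chambers. The latter would mean $\varphi$ is class-preserving, and I would then derive that $\varphi$ is induced, contradicting the choice of $\varphi$. For that argument: a class-preserving $\varphi$ restricts to an automorphism of $B^S_{O(P)}$ mapping $v_i$-copies to $v_i$-copies, which by forgetting the subdivision vertices yields a colour-preserving automorphism of $B_P$; via the natural isomorphism with $\mathrm{Aut}(P)$ this corresponds to an induced automorphism $\psi$ of $B_{O(P)}$. Then $\psi^{-1}\varphi$ is class-preserving and pointwise fixes $B^S_{O(P)}$, and within each copy of $O$ a class- and colour-preserving automorphism fixing the outer face must fix each chamber vertex-by-vertex (each chamber triangle of $O$ has three vertices of distinct colours). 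Hence $\psi^{-1}\varphi=\mathrm{id}$, so $\varphi=\psi$ is induced. This gives $F_\varphi=\emptyset$ and thus $(3)$.

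The main obstacle I anticipate is making the propagation claim precise in the cross-copy case: one must verify that the ``neighbour across an outer-face edge'' really is a function of $\pi(C)$ and $\alpha$ alone, which follows from the fact that adjacent chambers of $B_P$ always carry copies of $O$ of opposite orientation glued along a $v_iv_j$-path of their outer faces by a uniform, $O$-intrinsic prescription.
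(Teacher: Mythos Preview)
Your proof is correct and follows essentially the same approach as the paper. The paper organises the equivalences as $1\Leftrightarrow 2$ and $2\Leftrightarrow 3$ rather than your cycle, but the two substantive ingredients are identical: the propagation argument (if one chamber is sent to its own class then every adjacent chamber is, hence all are) and the observation that a class-preserving automorphism is induced. The paper states both of these in a single sentence each, whereas you spell out the cross-copy case of the propagation and the $\psi^{-1}\varphi=\mathrm{id}$ step explicitly; this extra detail is sound and does not deviate from the intended argument.
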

\begin{proof}

  $1. \Rightarrow 2.$: If an automorphism $\varphi$ of $B_{O(P)}$ maps all chambers to chambers of the same class, then it induces an automorphism of the chambers of $B_P$, so
  $\varphi$ is one of the automorphisms of $B_{O(P)}$ induced by an automorphism of $\mathrm{Aut}(B_P)$. If all automorphisms of  $B_{O(P)}$ have that property, then $|\mathrm{Aut}(B_{O(P)})|=|\mathrm{Aut}(B_P)|$.

  $1. \Leftarrow 2.$:  If an automorphism of $B_{O(P)}$ maps a chamber to a chamber in a different class, then it is none of the induced automorphisms, so $|\mathrm{Aut}(B_{O(P)})|>|\mathrm{Aut}(B_P)|$.

  $2. \Rightarrow 3.$: If an automorphism maps a chamber $C$ to a chamber in the same class, then all chambers sharing an edge with $C$ are mapped to
  chambers in the same class and by induction all chambers in $B_{O(P)}$ have this property.

  $2. \Leftarrow 3.$: Immediate.
\end{proof}

\section{Goldberg-Coxeter operations}

In chemistry and biology, 3-regular polyhedra that only have faces of size five and six are also known as fullerenes. They are often studied for their
interesting chemical properties and technological applications. The most common application of Goldberg-Coxeter operations is the construction of
fullerenes. The first publication describing these operations was by Goldberg in 1937 \cite{goldberg1937} in a mathematical context. Later, in 1962, closely
related constructions with the same resulting structures were described by Caspar and Klug \cite{caspar1962}, this time in a biological context as models for virus capsids (that is: protein
shells). These were later also described in a survey paper by Coxeter \cite{Coxeter_virus}. For a detailed description of the history of Goldberg-Coxeter operations we refer the reader to \cite{lspgocox}.

We will follow the approach in \cite{lspgocox} and define Goldberg-Coxeter operations, GC-operations for short, as triangles cut out of the barycentric
subdivision of the regular hexagonal tiling $T_H$ of the plane. Some of the statements in this paragraph are taken from that article. We use the following
coordinate system to describe $T_H$. The origin $(0,0)$ is in the middle of a face $f$. One vertex of $f$ is $(0,1)$ and the vertex of $f$ that appears in the
boundary of $f$ right before $(0,1)$ when following $\partial f$ in clockwise order is $(1,0)$. With this coordinate system, the point with integer coordinates $(x,y)$
is the center of a face of $T_H$ if and only if $x-y\equiv 0 \pmod{3}$, and a vertex otherwise. The point $(x,y)$ is the middle of an edge if and only if $x$
and $y$ are not both integers, but they are multiples of $1/2$ and $2(x-y)\equiv 0 \pmod{3}$.

Let $l$ and $m$ be two positive integers such that $l=m$ or $m=0$. The \emph{Goldberg-Coxeter operation} with parameters $(l,m)$ (short $\mathrm{GC}(l,m)$) is the labeled map
that is obtained by cutting a triangle out of $B_{T_H}$ with vertices $v_0=(l,m)$, $v_1=\left(\frac{l-m}{2}, \frac{l+2m}{2}\right)$ and $v_2=(0,0)$. The
point $v_1$ is the middle of the line segment between $v_0$ and the image of $v_0$ under a $60^\circ$ counterclockwise rotation around the origin. Examples of
GC-operations $\mathrm{GC}(l,0)$ and $\mathrm{GC}(l,l)$ are given in Figure~\ref{fig:GC_examples}.

GC-operations are also defined for non-negative integer parameters $(l,m)$ not satisfying the extra conditions $l=m$ or $m=0$ imposed here. These operations are known as
chiral GC-operations and do not necessarily preserve all the symmetries of a polyhedron, but only the orientation-preserving ones. Such GC-operations can be described
as c3-lopsp-operations -- see \cite{lopsp_polyhedrality} for a definition of c3-lopsp-operations. In this article we only consider GC-operations preserving all
symmetries. 

\begin{figure}
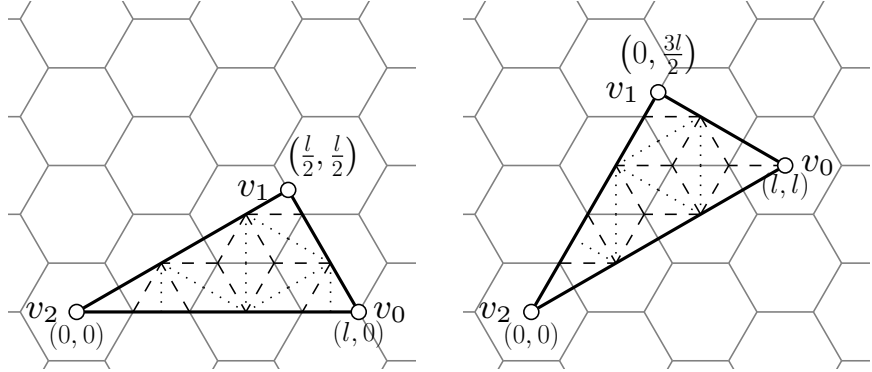

    \centering
    \scalebox{1.5}{\input{hexagontiling_m0.tikz}}
    \quad
    \scalebox{1.5}{\input{hexagontiling_mm.tikz}}
    \caption{The left image shows the Goldberg-Coxeter operation $\mathrm{GC}(5,0)$, and the right image shows the Goldberg-Coxeter operation $\mathrm{GC}(3,3)$.}
    \label{fig:GC_examples}
\end{figure}

We have the following lemma:

\begin{lemma}\label{lem:vi_degrees}
    Let $O$ be a GC-operation $\mathrm{GC}(l,l)$ or $\mathrm{GC}(l,0)$. Then it has the following properties:
    \begin{itemize}
        \item $v_2$ is in only one chamber of $O$
        \item If $v_0$ has colour 0, then $v_0$ is in only one chamber of $O$. Otherwise it is in two chambers.
    \end{itemize}
\end{lemma}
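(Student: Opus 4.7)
The plan is to compute the interior angles of the triangle $v_0v_1v_2$ at each corner and compare them with the angular size of a single chamber of $B_{T_H}$ at each type of vertex. Since the sides of the triangle are cut along edges of $B_{T_H}$ (this is built into the definition of a GC-operation), each corner of the triangle is tiled by an integer number of chambers of $B_{T_H}$, and the count at a corner is simply the corner angle divided by the chamber angle at that corner.

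First I would translate the lattice coordinates into Cartesian coordinates using the natural basis $e_1=(1,0)$, $e_2=(1/2,\sqrt{3}/2)$ in which the hexagonal tiling is drawn. A direct computation then shows that for both $\mathrm{GC}(l,0)$ and $\mathrm{GC}(l,l)$ the triangle has interior angles $30^\circ$ at $v_2$, $60^\circ$ at $v_0$, and $90^\circ$ at $v_1$, independent of $l$. Combined with the local geometry of $B_{T_H}$ --- at a colour-$0$ vertex of $T_H$ (of degree $3$) each of the $6$ incident chambers of $B_{T_H}$ subtends $60^\circ$; at a colour-$1$ vertex each of the $4$ incident chambers subtends $90^\circ$; and at a colour-$2$ vertex each of the $12$ incident chambers subtends $30^\circ$ --- this already reduces the lemma to identifying the colours of $v_0$ and $v_2$.

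For this I would use the coordinate characterisation recalled just before the definition of GC-operations: a point $(x,y)$ is a face centre (colour $2$) iff $x-y\equiv 0\pmod{3}$, an original vertex of $T_H$ (colour $0$) iff its coordinates are integers with $x-y\not\equiv 0\pmod{3}$, and an edge midpoint (colour $1$) otherwise. Since $v_2=(0,0)$ always has colour $2$, the $30^\circ$ corner at $v_2$ contains $30^\circ/30^\circ=1$ chamber, giving the first bullet. For $v_0$: in $\mathrm{GC}(l,l)$ we have $v_0=(l,l)$ with $x-y=0$, so $v_0$ is always of colour $2$ and the $60^\circ$ corner contains $60^\circ/30^\circ=2$ chambers; in $\mathrm{GC}(l,0)$ we have $v_0=(l,0)$ with $x-y=l$, so $v_0$ is of colour $0$ precisely when $3\nmid l$ --- in which case the $60^\circ$ corner contains $60^\circ/60^\circ=1$ chamber --- and of colour $2$ otherwise, giving $2$ chambers. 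This matches the second bullet exactly.

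The only non-routine point is the geometric assertion that each corner of the GC-triangle lies in a region that is tiled by full $B_{T_H}$-chambers, so that no fractional chamber arises; but this is built into the construction of GC as a c3-lsp-operation (its boundary follows edges of $B_{T_H}$ that lie on symmetry axes of $T_H$). Beyond that, the proof is the two elementary angle computations and the three mod-$3$ case distinctions above.
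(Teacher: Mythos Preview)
Your proof is correct and follows essentially the same approach as the paper: compute the interior angles of the triangle at $v_2$ and $v_0$ ($30^\circ$ and $60^\circ$, respectively) and divide by the angular width of a single chamber of $B_{T_H}$ at a vertex of the relevant colour ($30^\circ$ at colour~$2$, $60^\circ$ at colour~$0$). Your write-up is slightly more explicit---you set up the Cartesian basis, mention the $90^\circ$ at $v_1$, and spell out the mod-$3$ case split for $v_0$ in $\mathrm{GC}(l,0)$---but the substance is identical.
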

\begin{proof}
  
    \begin{itemize}
        \item The angle at $v_2$ in the triangle cut out of $B_{T_H}$ is always $30^\circ$. In $B_{T_H}$, the 12 edges incident with each vertex of colour 2,
          i.e. the faces, form angles of $30^\circ$ with their successors and predecessors. Therefore, $v_2$ is in exactly one chamber of $O$.
        \item The angle at $v_0$ in the triangle cut out of $B_{T_H}$ is always $60^\circ$. If $v_0$ corresponds to a face, it follows from the previous
          argument that it is in exactly two chambers of $O$. Every vertex of colour 0 in $B_{T_H}$ has degree 6 and the incident edges form angles of
          $60^\circ$. Therefore, if $v_0$ has colour 0, then it is in only one chamber of $O$.
    \end{itemize}
\end{proof}

Applying a GC-operation to $T_H$ results in a regular hexagonal tiling with smaller hexagons. In this infinite case 
not only the symmetry group of the result is the same as before applying the operation, but as there is up to isomorphism only one hexagonal tiling of the plane, even
the tiling is the same (up to isomorphism). This is not the case when we apply a GC-operation that is
not the identity to a finite $3$-regular map with only hexagonal faces. The result is another 3-regular map with only hexagonal faces, but the map and also the symmetry group are larger.
We use the following result by Negami  to prove that every non-trivial GC-operation increases the symmetry of every map on the torus with only hexagons as faces.

\begin{lemma}[S. Negami \cite{negami1983uniqueness}]\label{lem:negami}
    Every (simple) 6-regular map of genus 1 is vertex-transitive.
\end{lemma}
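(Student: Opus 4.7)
The plan is to deduce Negami's lemma from a classification of 6-regular triangulations of the plane, using the universal cover of the torus. First I would use Euler's formula to show the map is automatically a triangulation: on the torus $V-E+F=0$, and 6-regularity gives $2E=6V$, hence $E=3V$ and $F=2V$; combined with $\sum_{f}|f|=2E=6V=3F$ and the inequality $|f|\geq 3$ (which holds by simplicity), this forces every face to be a triangle. Next I would lift along the universal covering $\mathbb{R}^2 \to T^2$ to obtain a simple 6-regular triangulation $\widetilde{M}$ of the plane, together with a free, cocompact action of the deck group $\Lambda\cong\mathbb{Z}^2$ by combinatorial automorphisms.

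The heart of the argument is to show that $\widetilde{M}$ is combinatorially isomorphic to the standard triangular lattice $T_\Delta$. I would fix a base vertex $v_0$ and argue by induction on the combinatorial distance $k$ that the $k$-neighbourhood is uniquely determined. The link of $v_0$ is a $6$-cycle, so its closed neighbourhood is a hexagonal patch of six triangles. Each vertex on the boundary of the current patch already carries a contiguous fan of some of its six incident triangles; 6-regularity together with the rule that every edge lies in exactly two triangles uniquely determines the remaining fan, and simplicity combined with the simple-connectedness of $\mathbb{R}^2$ forbids any new vertex from being identified with an earlier one or from creating a non-disk face. This yields the desired isomorphism $\widetilde{M}\cong T_\Delta$.

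With this identification in hand, $\Lambda$ embeds as a fixed-point-free, cocompact subgroup of $\mathrm{Aut}(T_\Delta)$. Every non-translation combinatorial automorphism of $T_\Delta$ has a fixed vertex, edge or triangle (rotations and reflections do, and glide reflections would force the cover to be non-orientable), so $\Lambda$ must lie in the translation subgroup of $T_\Delta$. Since the translation subgroup acts transitively on vertices of $T_\Delta$, it descends to a transitive action on the vertices of the quotient map $\widetilde{M}/\Lambda$, which is exactly the original torus map. This proves vertex-transitivity.

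The main obstacle is the classification step: one has to exclude combinatorial ``self-gluings'' in the inductive growth, because a purely local 6-regularity condition in principle allows pathological extensions. The correct argument uses both simplicity (to rule out digons arising from premature identifications) and simple-connectedness of the universal cover (to rule out non-disk faces appearing when closing up the $k$-neighbourhoods). Once these two obstructions are handled, the combinatorial rigidity of $T_\Delta$ is forced and the rest of the proof is essentially algebraic.
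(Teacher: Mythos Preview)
The paper does not give its own proof of this lemma: it is quoted verbatim from Negami \cite{negami1983uniqueness} and used as a black box. So there is nothing to compare against, and your proposal is supplying an argument where the paper supplies none.

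Your outline is a reasonable route to the result. The reduction to a triangulation via Euler's formula is correct (simplicity rules out faces of size $1$ or $2$, so the average face size $3$ forces all faces to be triangles). The universal-cover strategy is the standard one, and the endgame is clean: once $\widetilde{M}\cong T_\Delta$, the deck group $\Lambda$ acts freely on the plane, so it contains no rotations (which fix a point), and orientability of the torus rules out orientation-reversing elements; hence $\Lambda$ sits inside the translation lattice $L\cong\mathbb{Z}^2$ of $T_\Delta$. Since $L$ is abelian, every translation normalises $\Lambda$ and therefore descends to the quotient, and transitivity of $L$ on vertices of $T_\Delta$ gives vertex-transitivity of $T_\Delta/\Lambda$.

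The only place your write-up is genuinely incomplete is the rigidity step showing $\widetilde{M}\cong T_\Delta$. Your inductive growth of hexagonal patches is the right picture, but as written it does not yet exclude identifications between boundary vertices of the growing patch at the same stage (two ``new'' vertices coinciding), which simplicity alone does not immediately forbid; you need simple-connectedness to argue that any such identification would bound a disk and then derive a contradiction from the degree/face constraints inside it. This is exactly the obstacle you flag, and Negami's original paper handles it; if you want a self-contained argument you should fill in that disk-counting step explicitly rather than leave it as a remark.
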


\begin{lemma}\label{lem:GC_genus1}
  Let $P$ be any polyhedral map of genus $1$ that has only faces of size $6$. Then any GC-operation $\mathrm{GC}(l,l)$ or $\mathrm{GC}(l,0)$
  that is not the identity (that is: $\mathrm{GC}(1,0)$) increases the symmetry of $P$.
\end{lemma}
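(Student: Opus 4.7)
The plan is to apply Lemma~\ref{lem:negami} to both $P$ and $O(P)$, compute the orders of their automorphism groups via orbit--stabiliser, and then compare.

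First I would check that $P$ is $3$-regular: Euler's formula on the torus together with $2E=6F$ forces the average vertex degree to be $3$, and polyhedrality forces the minimum degree to be at least $3$, so every vertex has degree exactly $3$. The same reasoning applies to $O(P)$, which is again a polyhedral $3$-regular hexagonal torus map: this is a local statement that follows from $O(T_H)\cong T_H$ together with the fact that every polyhedral $3$-regular hexagonal torus map is locally isomorphic to $T_H$ (polyhedrality is preserved because GC-operations are c3-lsp-operations). Writing $c=l^{2}+lm+m^{2}$ for the inflation factor of $O$, we have $c>1$ because $O\neq \mathrm{GC}(1,0)$, and $F(O(P))=c\,F(P)$.

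Next, the duals $D(P)$ and $D(O(P))$ are simple $6$-regular polyhedral maps of genus $1$ (simplicity follows from polyhedrality, and $6$-regularity from the fact that all faces of $P$ and of $O(P)$ are hexagons). Hence by Lemma~\ref{lem:negami} both $D(P)$ and $D(O(P))$ are vertex-transitive, which says that $\mathrm{Aut}(P)$ acts transitively on the $F(P)$ faces of $P$ and $\mathrm{Aut}(O(P))$ acts transitively on the $F(O(P))=c\,F(P)$ faces of $O(P)$.

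I would then fix a face $f$ of $P$ and let $v$ be the colour-$2$ vertex of $B_P$ representing $f$. Since $B^S_{O(P)}$ is a subdivision of $B_P$, the same vertex $v$ is a colour-$2$ vertex of $B_{O(P)}$, and therefore represents a face $f'$ of $O(P)$. If $\varphi\in\mathrm{Aut}(P)$ fixes $f$, then the induced automorphism of $B_{O(P)}$ restricts to $\varphi_B$ on $B^S_{O(P)}$; in particular it fixes $v$, and hence fixes $f'$. Because distinct elements of $\mathrm{Aut}(P)$ yield distinct induced automorphisms, this gives an injection
\[
\mathrm{Stab}_{\mathrm{Aut}(P)}(f)\hookrightarrow \mathrm{Stab}_{\mathrm{Aut}(O(P))}(f').
\]
Combining the above via orbit--stabiliser then gives
\[
|\mathrm{Aut}(O(P))|=F(O(P))\cdot|\mathrm{Stab}_{\mathrm{Aut}(O(P))}(f')|\ge c\,F(P)\cdot|\mathrm{Stab}_{\mathrm{Aut}(P)}(f)|=c\,|\mathrm{Aut}(P)|>|\mathrm{Aut}(P)|,
\]
as required. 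The main point that I expect to require care is the preservation of $3$-regularity and hexagonality under $O$, but this reduces to a local check at each vertex, edge and face using the identity $O(T_H)\cong T_H$.
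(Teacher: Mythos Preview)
Your argument is correct and reaches the same conclusion, but the final step is organised differently from the paper's. Both proofs start identically: Euler on the torus gives $3$-regularity, GC-operations send hexagonal $3$-regular torus maps to hexagonal $3$-regular torus maps, and Negami's Lemma~\ref{lem:negami} applied to the duals makes both $P$ and $O(P)$ face-transitive. From there the paper invokes Lemma~\ref{lem:vi_degrees} to see that $v_2$ lies in a single chamber class~$C$, so exactly the faces of $O(P)$ coming from faces of $P$ are built entirely from class-$C$ chambers; face-transitivity then produces an automorphism sending a class-$C$ chamber to a different class, and Lemma~\ref{lem:map_chamber_to_same_class} finishes. You instead run orbit--stabiliser: face-transitivity gives $|\mathrm{Aut}(O(P))|=F(O(P))\cdot|\mathrm{Stab}(f')|$ and $|\mathrm{Aut}(P)|=F(P)\cdot|\mathrm{Stab}(f)|$, the injection of stabilisers comes from the induced automorphism (using only that $v_2$ has colour~$2$, not that it lies in a single chamber), and $F(O(P))=c\,F(P)$ because both maps satisfy $F=E/3$. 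Your route avoids both Lemma~\ref{lem:vi_degrees} and Lemma~\ref{lem:map_chamber_to_same_class} and yields the sharper quantitative statement $|\mathrm{Aut}(O(P))|\ge c\,|\mathrm{Aut}(P)|$; the paper's route stays within the chamber-class framework used throughout and makes the mechanism (an automorphism mixing chamber classes) explicit.
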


\begin{proof}
    It is not difficult to prove using the Euler characteristic that any map of genus 1 that only has faces of size 6 is 3-regular. If such a map is also polyhedral, its dual is a simple, 6-regular map of genus 1. By Lemma~\ref{lem:negami}, it is vertex-transitive. Therefore any polyhedral map of genus 1 that
    only has faces of size 6 is face-transitive.

    Let $O$ be any GC-operation that is not the identity. If follows from the definition of GC-operations that all faces of $O(P)$ have size 6 and therefore it
    is face-transitive. The vertex $v_2$ is of colour 2 and it is in only one chamber $C$ in $O$ by Lemma~\ref{lem:vi_degrees}. This means that for every face in
    $P$, there is exactly one face in $O(P)$ that consists entirely of chambers of class $C$. All the chambers of class $C$ are in such faces. As there is at
    least one other chamber in $O$, there is a face in $O(P)$ that contains no chambers of class $C$. However, as $O(P)$ is face-transitive, this implies that
    there is an automorphism that maps a chamber of class $C$ to a chamber of another class. By Lemma~\ref{lem:map_chamber_to_same_class}, $O$ increases the
    symmetry of $P$.
\end{proof}

\subsection{Truncation and the GC-operation $\mathrm{GC}(1,1)$}\label{subsec:GC_bitruncation}

The only GC-operation that we will consider by itself is $\mathrm{GC}(1,1)$, also known as {\em bitruncation}, {\em leapfrog}, and {\em
  zip}. We will use the results in this section to determine when other GC-operations can increase symmetry. As the name suggests, bitruncation is closely
related to truncation, or to be exact: bitruncation is the truncation of the dual map. Both c3-lsp-operations are shown in
Figure~\ref{fig:truncation}. Informally, truncation `cuts off' the
vertices of a polyhedron, replacing a vertex $v$ by a cycle of length $deg(v)$. Bitruncation will be denoted by $B$ and truncation by
$T$. We will prove that truncation cannot increase the symmetry of a polyhedron, but it can increase the symmetry for polyhedral maps of higher genus. As
$B =T\circ D$, Lemma~\ref{lem:dual_operations} implies that that is also true for bitruncation.

\begin{figure}
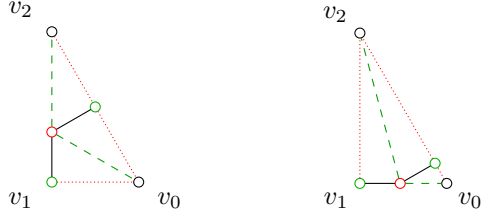

    \centering
    \scalebox{1}{\input{deco_bitruncation.tikz}}
    \qquad\qquad
    \scalebox{1}{\input{deco_truncation.tikz}}
    \caption{The operation bitruncation is shown on the left, truncation is on the right.}
    \label{fig:truncation}
\end{figure}

\begin{lemma}\label{lem: truncation does not increase symmetry on plane}
    Let $P$ be a (plane) polyhedron and $T$ the c3-lsp-operation truncation. Then \[\mathrm{Aut}(P)= \mathrm{Aut}(T(P)).\] 
\end{lemma}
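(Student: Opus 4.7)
The plan is to assume that $T$ increases the symmetry of $P$ and derive a contradiction from \Cref{lem:map_chamber_to_same_class}, using a degree comparison at colour-$2$ vertices of $B_{T(P)}$ in which the plane hypothesis enters through \Cref{lem:facesize5_plane}.

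First I would read off the three chamber classes of $T$ from \Cref{fig:truncation}. Since truncation turns each vertex of $P$ into a face of $T(P)$, the special vertex $v_0$ in $T$ has colour $2$ and corresponds to the centre of a \emph{vertex-face}, while $v_2$ also has colour $2$ and corresponds to the centre of an \emph{original face}. The three chambers of $T$ are arranged so that exactly one of them has $v_0$ as its colour-$2$ vertex and the other two have $v_2$; I will call the first class the \emph{vertex-face class} and the other two the \emph{original-face classes}.

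Next I would compute the relevant degrees in $B_{T(P)}$. The centre of a face of size $k$ has degree $2k$. An original face of $T(P)$ has size $2s$ where $s \ge 3$ is the size of the corresponding face of $P$, so its centre has degree $4s \ge 12$. A vertex-face has size equal to the degree $d$ of the corresponding vertex of $P$, so its centre has degree $2d$. Applying \Cref{lem:facesize5_plane} to the dual of $P$ (which is itself a plane polyhedron) gives a vertex $v$ of $P$ of degree $d \le 5$, and the centre of the associated vertex-face $V_v$ in $T(P)$ then has degree $2d \le 10$ in $B_{T(P)}$.

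Finally, suppose for contradiction that $T$ increases the symmetry of $P$. By \Cref{lem:map_chamber_to_same_class} there is an automorphism $\varphi$ of $B_{T(P)}$ that sends every chamber to a chamber of a different class. Pick any chamber $C$ whose colour-$2$ vertex is the centre of $V_v$; such a $C$ exists and lies in the vertex-face class. Then $\varphi(C)$ lies in an original-face class, so its colour-$2$ vertex has degree at least $12$; but $\varphi$ preserves vertex degrees and the colour-$2$ vertex of $C$ has degree at most $10$, a contradiction. Together with the automatic inequality $|\mathrm{Aut}(P)| \le |\mathrm{Aut}(T(P))|$, this gives equality. The only delicate step is the identification of chamber classes via the type of their colour-$2$ vertex; once that is in place, the degree gap between ``at most $10$'' and ``at least $12$'' supplied by \Cref{lem:facesize5_plane} closes the argument with no further case analysis, and in particular one does not need to invoke \Cref{thm: automorphism preserving subdivision is ambo} or argue about a possible factorisation of $T$ through ambo.
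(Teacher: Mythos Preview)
Your argument is correct and is essentially the paper's own proof: both locate a colour-$2$ vertex in $B_{T(P)}$ of degree at most $10$, observe that colour-$2$ vertices of class $v_2$ have degree at least $12$, and then invoke \Cref{lem:map_chamber_to_same_class} via the unique chamber class at $v_0$. The only cosmetic difference is that the paper applies \Cref{lem:facesize5_plane} directly to the plane polyhedron $T(P)$ rather than to the dual of $P$, and it phrases the conclusion positively (every automorphism fixes the class of some chamber) instead of by contradiction.
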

\begin{proof}
  Let $\varphi$ be any automorphism of $T(P)$. As $T(P)$ is plane, there exists a face in $T(P)$ of size at most 5 by Lemma~\ref{lem:facesize5_plane} and a
  corresponding vertex $f$ in $B_{T(P)}$ of degree at most $10$. The only vertices of colour 2 in $T$ are $v_0$ and $v_2$, so $f$ is mapped to $v_0$ or $v_2$ by
  $\pi$. However, as every face of $P$ has size at least $3$, the colour 2 vertices of $T(P)$ that are mapped to $v_2$ by $\pi$ must have degree at least $12$.
  Therefore $\pi(f) =  v_0$. As automorphisms map vertices to vertices of the same degree, $\pi(\varphi(f))$ is also $v_0$. Let $C$ be a $T(P)$-chamber containing $f$. Then $C$ and
  $\varphi(C)$ both contain a vertex in $\pi^{-1}(v_0)$. As $v_0$ is in only one chamber in $T$, the $T(P)$-chambers $C$ and $\varphi(C)$ are in the same
  class. The lemma now follows from Lemma~\ref{lem:map_chamber_to_same_class}.
\end{proof}

We now define a class $\{H_g\mid g\in \mathbb{N}\setminus\{0,1\}\}$ of polyhedral maps. Each polyhedral map $H_g$ will have genus $g$, and $T(H_g)$ will have
strictly more automorphisms than $H_g$. In \cite{korbitmaps} the map $H_2$ -- a polyhedral map of genus $2$ -- is defined and it is shown that truncation
increases its symmetry. That example can be extended to higher genera. Figure~\ref{fig:genus3_cut} shows for $g=3$ how
the polyhedral map $H_g$ of genus $g$ is constructed from the polyhedral map $H_{g-1}$ of genus $g-1$. The map $H_2$ is the map that is
obtained by ignoring the slices with orange and red arrows, and gluing the dashed lines together. The map $H_3$ is obtained by inserting the slices with red and
orange arrows into $H_2$ as shown, gluing dashed lines together. For larger $g$ this process can be iterated.

More formally, $H_g$ is defined as follows. The map $H_g$ has $4g$ vertices \linebreak[4] $A_0,\ldots, A_{2g-1},B_0,\ldots,B_{2g-1}$. The rotation system is
defined as follows -- where indices are taken modulo $2g$:

\begin{center}
\begin{tabular}{llllllll}
    $A_{i}$:&$A_{i+1}$&$A_{i-1}$&$B_{i}$&$B_{i+g}$&$B_{i+g+1}$&$B_{i+1}$\\
    $B_{i}$:&$B_{i-1}$&$B_{i+1}$&$A_{i+g}$&$A_{i}$&$A_{i-1}$&$A_{i+g-1}$
\end{tabular}
\end{center}

The map $H_g$ has two faces of size $2g$, $2g$ faces of size 4, and $4g$ faces of size 3. Every face of size $2g$ is adjacent to $2g$ different triangles, every
quadrangle is adjacent to four different triangles, and every triangle is adjacent to one face of size $2g$ and two different quadrangles.  The top image in
Figure~\ref{fig:genus3_orbits} shows $H_3$.

\begin{figure}
    \centering
    \includegraphics[width =0.7\textwidth]{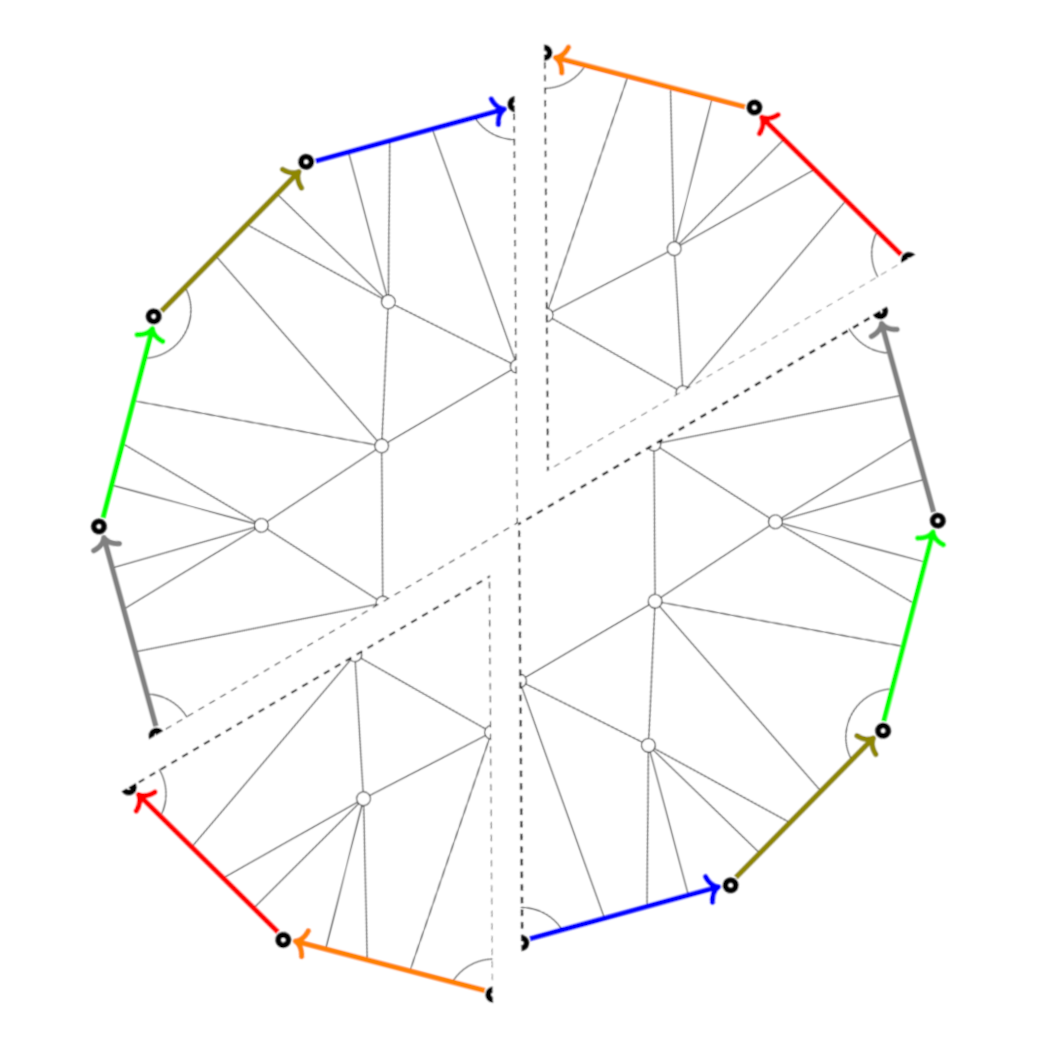}
    \caption{This figure shows how $H_3$ is constructed from $H_2$. Arrows with the same colour must be identified.}
    \label{fig:genus3_cut}
\end{figure}

\begin{figure}
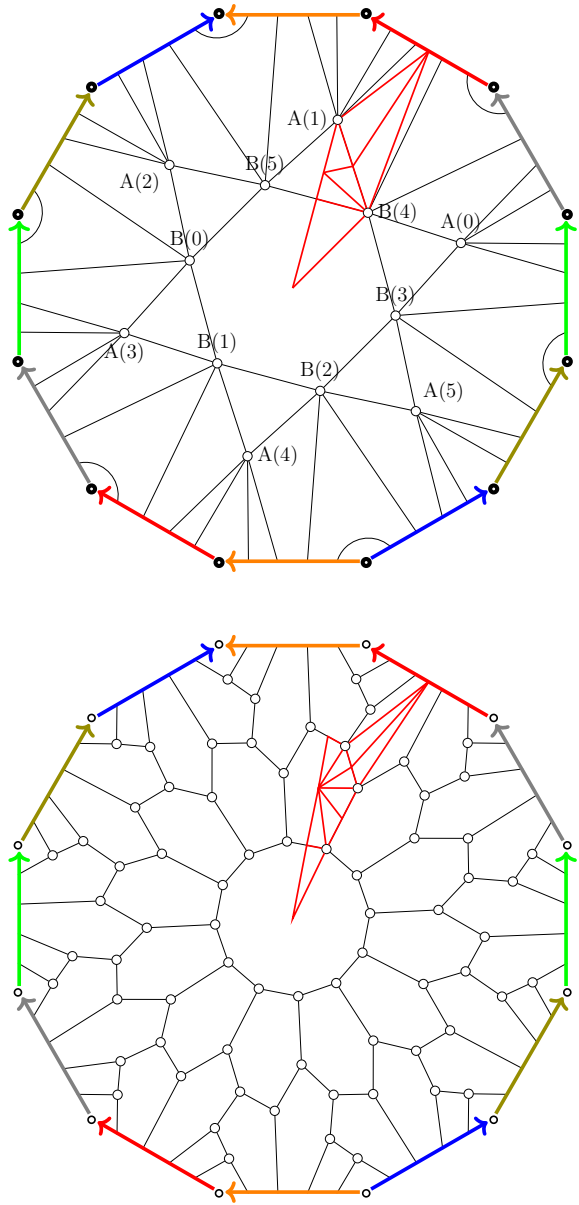

    \centering
    \scalebox{0.54}{\input{genus3_withorbits.tikz}}
    
    \vspace*{0.8cm}
    \scalebox{0.54}{\input{genus3trunc_withorbits.tikz}}
    \caption{The maps $H_3$ and $T(H_3)$, each with one chamber from each orbit drawn in red.}
    \label{fig:genus3_orbits}
\end{figure}

\begin{lemma}\label{lem:truncationfrom1}
    For every $g\in \mathbb{N}\setminus \{0\}$, there exists a polyhedral map $P$ such that truncation increases the symmetry of $P$.
\end{lemma}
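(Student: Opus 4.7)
The plan is to treat $g=1$ and $g\ge 2$ separately. For $g=1$, I reduce to Goldberg--Coxeter operations on the torus: fix any polyhedral map $Q$ of genus $1$ whose faces are all hexagons (existence: take any sufficiently large quotient of the regular hexagonal tiling of the plane by a lattice, so the face-width is at least $3$). Since bitruncation $B=\mathrm{GC}(1,1)$ is not the identity, Lemma~\ref{lem:GC_genus1} gives $|\mathrm{Aut}(B(Q))|>|\mathrm{Aut}(Q)|$. Using $B=T\circ D$ and setting $P:=D(Q)$, I obtain $T(P)=B(Q)$ and $|\mathrm{Aut}(P)|=|\mathrm{Aut}(Q)|$, so $|\mathrm{Aut}(T(P))|>|\mathrm{Aut}(P)|$. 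Hence $T$ strictly increases the symmetry of the polyhedral genus-$1$ map $P$.

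For $g\ge 2$ the idea is to verify that the maps $H_g$ defined above do the job. The first step is routine: from the rotation system one reads off $|V|=4g$ and $|E|=12g$, and with the claimed $|F|=6g+2$ Euler's formula gives genus $g$, while 3-connectedness and face-width at least $3$ follow by direct inspection of the rotation system and the stated face-adjacency pattern (each triangle meets exactly one $2g$-face and two distinct quadrangles). The second, main step is to produce, for each such $g$, an automorphism of $B_{T(H_g)}$ that sends some chamber to a chamber of a different class; by Lemma~\ref{lem:map_chamber_to_same_class}, this is equivalent to $T$ strictly increasing the symmetry of $H_g$.

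The case $g=2$ is already established in \cite{korbitmaps}, and the recursive construction of $H_g$ from $H_{g-1}$ shown in Figure~\ref{fig:genus3_cut} is designed precisely so the same mechanism propagates to every $g\ge 2$. My plan is to describe the extra automorphism explicitly on the rotation system of $T(H_g)$, by analogy with the $g=2$ case, and then to check that it is (a) compatible with the rotation system of $T(H_g)$ and (b) not induced by any element of $\mathrm{Aut}(H_g)$, the latter being a finite rotation-system calculation. The hard part will be keeping this description uniform in $g$ and dispatching the non-inducedness check without a detailed classification of $\mathrm{Aut}(H_g)$: ideally the involution will be described in terms of the labels $A_i,B_i$ so that the same formula works simultaneously for all $g\ge 2$. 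Figure~\ref{fig:genus3_orbits} illustrates the outcome at $g=3$, where the red chambers mark one representative per orbit and $T(H_3)$ has strictly fewer orbits than $H_3$, visually confirming the symmetry increase.
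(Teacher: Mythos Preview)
Your treatment of $g=1$ is correct and matches the paper's approach (the paper simply cites Lemma~\ref{lem:GC_genus1}; you spell out the $B=T\circ D$ reduction explicitly, which is the content of Lemma~\ref{lem:dual_operations}).

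For $g\ge 2$, however, there is a genuine gap: you announce a plan to exhibit an explicit non-induced automorphism of $B_{T(H_g)}$ uniformly in $g$, but you do not carry it out. Writing down such an involution on the rotation system of $T(H_g)$ and verifying both compatibility and non-inducedness is exactly the work the lemma requires, and you have deferred all of it. Also, your final sentence is backwards: $T(H_3)$ has \emph{more} chamber-orbits than $H_3$, not fewer (for $g>2$ the counts are $6$ and $9$); the symmetry increase is detected not by a drop in the orbit count but by the fact that the orbit count fails to triple while the chamber count does.

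This last observation is in fact the paper's route, and it is considerably lighter than yours. Rather than construct an explicit extra automorphism, the paper just counts chamber-orbits in $H_g$ and in $T(H_g)$ (using the description of faces and the slice-construction) and then invokes the identity $|\mathrm{Aut}(P)|=\mathcal{C}^{P}/n_P$ coming from Lemma~\ref{lem: the action on the chambers is free}. Since truncation has inflation factor $3$, one gets
\[
\frac{|\mathrm{Aut}(T(H_g))|}{|\mathrm{Aut}(H_g)|}=3\cdot\frac{n_{H_g}}{n_{T(H_g)}},
\]
and the orbit counts ($2$ and $3$ for $g=2$; $6$ and $9$ for $g>2$) give the ratio $2$ directly. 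This bypasses any need to name the extra automorphism or to classify $\mathrm{Aut}(H_g)$, which is precisely the ``hard part'' you flagged.
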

\begin{proof}
    For $g=1$ this follows from Lemma~\ref{lem:GC_genus1}.

    For $g=2$, it is stated in \cite{korbitmaps} that $H_2$ has $2$ chamber-orbits and its truncation has $3$ chamber-orbits. Truncation triples the number of
    chambers, but the number of orbits of $T(H_2)$ is only $3/2$ times the number of orbits of $H_2$. It follows from
    \Cref{lem: the action on the chambers is free} that for any polyhedral map, the number of elements in a chamber-orbit equals the size of the symmetry group. If $n_P$
    represents the number of chamber-orbits in a polyhedral map $P$ and $\mathcal{C}^{P}$ the number of chambers in $P$, then
    
    \[\dfrac{|\mathrm{Aut}(T(H_2))|}{|\mathrm{Aut}(H_2)|}=\dfrac{\dfrac{\mathcal{C}^{T(H_2)}}{n_{T(H_2)}}}{\dfrac{\mathcal{C}^{H_2}}{n_{H_2}}}= \dfrac{\mathcal{C}^{T(H_2)}}{\mathcal{C}^{H_2}}\cdot \dfrac{n_{H_2}}{n_{T(H_2)}} =3\cdot \dfrac{2}{3}=2.\]

    For $g> 2$, it can be checked that the map $H_g$ has 6 chamber-orbits, one orbit consists of chambers in the faces of size $2g$, 3 orbits of chambers in the triangles, and 2 orbits of chambers in the quadrangles. The map $T(H_g)$ has 9 chamber-orbits, 6 in the faces of size $6$, one in faces of size $4g$ and two in the faces of size 8. For $g=3$, the chamber-orbits are shown in Figure~\ref{fig:genus3_orbits}. As in the case $g=2$, we get $\frac{|\mathrm{Aut}(T(H_g))|}{|\mathrm{Aut}(H_g)|}=2$. 
\end{proof}

\begin{corollary}\label{cor:bitruncation}
  There is a polyhedral map $P$ of genus $g$ so that truncation increases the symmetry of $P$ if and only if $g>0$.

  As $\mathrm{GC}(1,1)=T\circ D$, the same is true for $\mathrm{GC}(1,1)$.
\end{corollary}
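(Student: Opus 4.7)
The plan is to assemble this corollary directly from the two lemmas that immediately precede it, and then transfer the truncation statement to $\mathrm{GC}(1,1)$ via Lemma~\ref{lem:dual_operations}. First, for truncation itself: the ``only if'' direction (no plane polyhedron has its symmetry increased by $T$) is exactly the conclusion of Lemma~\ref{lem: truncation does not increase symmetry on plane}, while the ``if'' direction (for every $g \geq 1$ such a polyhedral map exists) is exactly Lemma~\ref{lem:truncationfrom1}, which itself leans on Lemma~\ref{lem:GC_genus1} in genus $1$ and on the explicit family $H_g$ in genus $g \geq 2$. So the first sentence of the corollary requires no new argument beyond citing these results.

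For the second sentence, I would invoke the third equivalence of Lemma~\ref{lem:dual_operations} with $O = T$. That equivalence says that $T$ increases the symmetry of a polyhedral map $P$ if and only if $T \circ D = \mathrm{GC}(1,1)$ increases the symmetry of $D(P)$. Since $D$ is a c3-lsp-operation it preserves polyhedrality, and since lsp-operations do not change the genus, $D$ restricts to a self-inverse bijection on the set of polyhedral maps of any fixed genus $g$. Therefore, for each $g$, the existence of a polyhedral map of genus $g$ whose symmetry is increased by $\mathrm{GC}(1,1)$ is equivalent to the existence of such a map for $T$, and by the first paragraph this happens exactly when $g > 0$.

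There is essentially no substantive obstacle; the corollary is a packaging of Lemma~\ref{lem: truncation does not increase symmetry on plane} and Lemma~\ref{lem:truncationfrom1} together with the translation provided by Lemma~\ref{lem:dual_operations}. The only point I would be careful to record explicitly is that $D$ preserves both polyhedrality and genus, so that the bijection $P \mapsto D(P)$ really stays within the class of polyhedral maps of the given genus when we move between $T$ and $\mathrm{GC}(1,1)$.
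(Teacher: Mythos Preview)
Your proposal is correct and matches the paper's intent: the corollary is stated without proof in the paper precisely because it is the direct combination of Lemma~\ref{lem: truncation does not increase symmetry on plane} and Lemma~\ref{lem:truncationfrom1}, with the transfer to $\mathrm{GC}(1,1)=T\circ D$ handled by Lemma~\ref{lem:dual_operations}. Your explicit remark that $D$ preserves both polyhedrality and genus is exactly the small point needed to make the transfer rigorous.
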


\subsection{GC-operations where $v_0$ has colour 0}\label{subsec:GC_v0colour0}

In this section we look at GC-operations where $v_0$ has colour 0, i.e. where $(l,m)$ is a vertex in $T_H$.  We have seen that this is the case if
$x-y\not\equiv 0 \pmod{3}$ and thus $l\neq m$. It follows that $m=0$. We have already proven in Lemma~\ref{lem:GC_genus1} that for genus $1$ all GC-operations can increase
symmetry. In Lemma~\ref{lem:GC_colour0_allgenus} we will prove that for GC-operations with $v_0$ of colour 0, genus 1 is the only genus where they can increase
symmetry.

\begin{lemma}\label{lem:GC_colour0_allgenus}
    Let $P$ be a polyhedral map of a genus $g\not= 1$ and let $O$ be a GC-operation with parameters $(l,0)$ such that $l$ is not a multiple of 3. Then $O$ does not increase the symmetry of $P$.
\end{lemma}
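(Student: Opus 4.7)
The plan is to suppose for contradiction that $O=\mathrm{GC}(l,0)$ (with $l\not\equiv 0\pmod{3}$, so that $v_0$ has colour $0$) increases the symmetry of a polyhedral map $P$ of genus $g\neq 1$, and to derive that $P$ must be $3$-regular with every face a hexagon -- which by Euler's formula is only possible in genus $1$. By \Cref{lem:map_chamber_to_same_class} the assumption produces an automorphism $\varphi$ of $B_{O(P)}$ that maps \emph{every} chamber to a chamber of a different class.

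The engine of the proof is a degree analysis of the colour-$0$ and colour-$2$ vertices of $B_{O(P)}$. By \Cref{lem:vi_degrees} both $v_0$ (colour $0$) and $v_2$ (colour $2$) lie in exactly one chamber of $O$. Gluing the $2\deg_P(v)$ copies of $O$ around the vertex $u\in\pi^{-1}(v_0)$ corresponding to $v\in P$ contributes two boundary edges per copy, each identified with the corresponding edge of a neighbouring copy, and yields $\deg_{B_{O(P)}}(u)=2\deg_P(v)$; analogously, for $u'\in\pi^{-1}(v_2)$ corresponding to a face of $P$ of size $s$ one has $\deg_{B_{O(P)}}(u')=2s$. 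On the other hand, every colour-$0$ vertex of $O$ that is not $v_0$ (interior to $O$, or on one of the three straight boundary sides) has degree $6$ in $B_{O(P)}$, and every colour-$2$ vertex of $O$ that is not $v_2$ has degree $12$ -- including $v_1$ itself when $l$ is even and $v_1$ becomes a face centre, since a direct count around the $90^\circ$ corner at $v_1$ with the four chambers of $B_P$ meeting at an edge midpoint gives total degree $12$.

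Now suppose $P$ has a vertex $v$ with $\deg_P(v)\neq 3$, and let $u\in\pi^{-1}(v_0)$ be the corresponding vertex. Since $\deg_{B_{O(P)}}(u)=2\deg_P(v)\neq 6$, the image $\varphi(u)$ cannot be any colour-$0$ vertex of one of the other types and must itself lie in $\pi^{-1}(v_0)$. But since $v_0$ is in a unique chamber $c_0$ of $O$, \emph{every} chamber of $B_{O(P)}$ incident to a vertex of $\pi^{-1}(v_0)$ is of class $c_0$. Hence $\varphi$ maps the chambers at $u$ to chambers of the same class $c_0$, contradicting the property that $\varphi$ maps every chamber to a chamber of a different class. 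An identical argument with $\pi^{-1}(v_2)$ handles the case in which $P$ has a face of size different from $6$.

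Therefore $P$ is $3$-regular with every face a hexagon, and combining $V-E+F=2-2g$ with $2E=3V=6F$ forces $g=1$, contrary to the hypothesis $g\neq 1$. The main obstacle I foresee is the degree bookkeeping in paragraph two: one must verify that \emph{every} colour-$0$ vertex of $B_{O(P)}$ outside $\pi^{-1}(v_0)$ has degree exactly $6$, and every colour-$2$ vertex outside $\pi^{-1}(v_2)$ has degree exactly $12$. This requires treating separately the interior vertices of $O$, the non-corner boundary vertices on each of the three sides, and -- for colour $2$ when $l$ is even -- the corner $v_1$; the fact that the three sides of the triangle defining $\mathrm{GC}(l,0)$ are straight lines of $B_{T_H}$ is what makes these counts uniform.
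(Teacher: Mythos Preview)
Your proof is correct and follows essentially the same route as the paper's: both use \Cref{lem:map_chamber_to_same_class} together with the degree analysis showing that every vertex of $O(P)$ outside $\pi^{-1}(v_0)$ has degree $3$ and every face outside $\pi^{-1}(v_2)$ has size $6$, then invoke \Cref{lem:vi_degrees} to conclude that an automorphism must fix the class of chambers at any exceptional vertex or face. The only cosmetic differences are that you work in $B_{O(P)}$ rather than $O(P)$, organize the argument as a contradiction rather than directly, and spell out the boundary and $v_1$ degree counts that the paper leaves implicit.
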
 
\begin{proof}
    Let $\varphi$ be any automorphism of $O(P)$. By Lemma~\ref{lem:map_chamber_to_same_class} it suffices to show that there is a chamber $C$ such that $\varphi(C)$ is in the same class as $C$.
    As the genus is not 1, $O(P)$ has a vertex of degree at least 4 or a face of size different from 6. 
    Assume first that there is a vertex $v$ in $O(P)$ with $deg(v) \neq 3$. 
    Every vertex $w$ in $O(P)$ such that $\pi(w) \neq v_0$, i.e. $w$ does not correspond to a vertex of $P$, has degree 3. Therefore, $\pi(v)=v_0$ and as $deg(v) = deg(\varphi(v))$ also $\pi(\varphi(v))=v_0$. Let $C$ be a chamber in $O(P)$ containing $v$. The chamber $\varphi(C)$ contains $\varphi(v)$. It follows from Lemma~\ref{lem:vi_degrees} that $v_0$ is incident to only one chamber in $O$. Therefore, $C$ and $\varphi(C)$ are in the same class. 

    Now assume that there is a face $f$ in $O(P)$ that is not of size 6. As every face in $O(P)$ that does not correspond to a face of $P$ has size 6, $\varphi(f)$ corresponds to a face of $P$. Similarly as in the previous case, Lemma~\ref{lem:vi_degrees} implies that there is exactly one chamber $C$ containing $f$ such that $C$ and $\varphi(C)$ are in the same class.
\end{proof}

\subsection{GC-operations where $v_0$ has colour 2}\label{subsec:GC_v0colour2}

In Subsection~\ref{subsec:GC_bitruncation} it was proven that bitruncation and truncation can increase symmetry on every genus except genus 0. As for
bitruncation $v_0$ is of colour 2, it is clear that the results from Subsection~\ref{subsec:GC_v0colour0} do not hold if $v_0$ is of colour 2. In
Corollary~\ref{cor:GC_v0colour2} it will be proven that, just like bitruncation, every GC-operation with $v_0$ of colour 2 can increase symmetry on every genus
except genus 0.

\begin{lemma}\label{lem:GC_decomposition}
  ~
  
  \begin{itemize}
 
  \item If $\mathrm{GC}(l,0)$ is a GC-operation such that $v_0$ is of colour 2, then $l=3k$, $k\in \mathbb{N}$, and  $\mathrm{GC}(3k,0)=\mathrm{GC}(k,k)\circ \mathrm{GC}(1,1)$.

     \item For any GC-operation $\mathrm{GC}(l,l)$, $\mathrm{GC}(l,l)=\mathrm{GC}(l,0)\circ \mathrm{GC}(1,1)$.
    \end{itemize}

\end{lemma}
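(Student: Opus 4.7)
First I would prove the constraint $l = 3k$. The vertex $v_0 = (l, 0)$ is a point with integer coordinates in $T_H$, and by the coordinate characterisation of $T_H$ recalled in Section~3, such a point is a face centre --- i.e.\ of colour $2$ --- if and only if $l - 0 \equiv 0 \pmod{3}$. Hence $l = 3k$ for some $k \in \mathbb{N}$.

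For the two decomposition identities, my plan is to use the arithmetic of Eisenstein integers. I identify the grid coordinate $(x, y)$ with the complex number $x + y\omega$, where $\omega = e^{i\pi/3}$, so that the position of $v_0$ in a GC-operation $\mathrm{GC}(l, m)$ is the Eisenstein integer $l + m\omega$. The key claim is that composition of GC-operations corresponds to multiplication in $\mathbb{Z}[\omega]$: namely, $\mathrm{GC}(l_1, m_1) \circ \mathrm{GC}(l_2, m_2)$ is the GC-operation whose $v_0$-position equals $(l_1 + m_1 \omega)(l_2 + m_2 \omega)$, considered modulo multiplication by a unit of $\mathbb{Z}[\omega]$ (i.e.\ a sixth root of unity, equivalently a $60^\circ$ rotation about the origin, under which $T_H$ is invariant and which therefore gives isomorphic operations). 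The justification is geometric: applying $\mathrm{GC}(l_2, m_2)$ to $T_H$ produces a hexagonal refinement in which the original $T_H$ sits as a rescaled subtiling with scaling factor $l_2 + m_2 \omega$; applying $\mathrm{GC}(l_1, m_1)$ to the result further rescales by $l_1 + m_1 \omega$, so the composite operation rescales by the product.

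With this correspondence in hand, the identities fall out by direct computation. Using $\omega^2 = \omega - 1$:
\[
(k + k\omega)(1 + \omega) = k(1 + \omega)^2 = k(1 + 2\omega + \omega^2) = k(3\omega) = 3k\omega,
\]
and since $3k\omega = \omega \cdot 3k$ differs from $3k$ by the unit $\omega$, it represents the same operation as $\mathrm{GC}(3k, 0)$. For the second item,
\[
l \cdot (1 + \omega) = l + l\omega,
\]
which is precisely $\mathrm{GC}(l, l)$.

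The main obstacle is carefully justifying the composition-as-multiplication claim, since one must trace how each chamber of the inner operation is subdivided by the outer one and verify that the resulting fundamental triangle sits in $B_{T_H}$ with $v_0$ at the predicted position. An alternative, purely combinatorial route would be to exhibit, inside $B_{T_H}$, an explicit partition of the triangle of $\mathrm{GC}(3k,0)$ (respectively $\mathrm{GC}(l,l)$) into three copies of $\mathrm{GC}(k,k)$ (respectively $\mathrm{GC}(l,0)$) glued according to the chamber pattern of $\mathrm{GC}(1,1)$, and to check that the colouring, edge identifications, and special vertices match; this avoids complex arithmetic at the price of a small case analysis over the three chambers of $\mathrm{GC}(1,1)$.
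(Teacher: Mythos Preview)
Your proposal is correct but takes a genuinely different route from the paper. The paper argues purely geometrically inside $B_{T_H}$: for $\mathrm{GC}(3k,0)$ it observes that the barycentre of the triangle with corners $(0,0)$, $(3k,0)$, $(0,3k)$ is $(k,k)$, which is a face centre of $T_H$, so the three medians lie on mirror axes of $B_{T_H}$; these cut the big triangle into three congruent pieces, each a copy of $\mathrm{GC}(k,k)$, glued exactly in the pattern of the three chambers of $\mathrm{GC}(1,1)$. The same argument with barycentre $(0,l)$ handles $\mathrm{GC}(l,l)$. This is precisely the ``alternative, purely combinatorial route'' you sketch at the end, and the paper carries it out in a few lines with a picture.

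Your primary route via Eisenstein integers is more algebraic and more general: once the composition-equals-multiplication principle is established, it yields decomposition formulas for arbitrary (even chiral) GC-operations, and the two identities here become one-line computations. The cost is exactly the obstacle you flag: justifying that $\mathrm{GC}(l_1,m_1)\circ\mathrm{GC}(l_2,m_2)$ corresponds to the product $(l_1+m_1\omega)(l_2+m_2\omega)$ up to a unit is itself a nontrivial statement about how the refined hexagonal tiling sits inside the original, and in the lsp framework of this paper it requires tracking the colouring and the special vertices $v_0,v_1,v_2$ through the composition. The paper's approach avoids this by exploiting only reflection symmetries of $T_H$, which makes it shorter and entirely self-contained, at the price of being tailored to the specific factor $\mathrm{GC}(1,1)$ rather than giving a general composition law.
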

\begin{proof}
    We already mentioned that a point with coordinates $(x,y)$ is the center of a face of $T_H$ if and only if $x-y\equiv 0 \pmod{3}$. It follows that if $m=0$,
    then $l=3k$ for a natural number $k$.  Assume first that $m=0$ and $l=3k$ for a natural number $k$. The center of the triangle with vertices $(0,0)$,
    $(0,3k)$ and $(3k,0)$ has coordinates $(k,k)$. As $k-k\equiv 0 \pmod{3}$, this point corresponds to a face of $T_H$ and there are symmetry axis through
    $(0,0)$ and $(k,k)$, through $(0,3k)$ and $(k,k)$ and through $(3k,0)$ and $(k,k)$. As shown in Figure~\ref{fig:GC_bitruncation}, these symmetries imply
    that $O$ consists of three copies of the GC-operation $\mathrm{GC}(k,k)$ (shown in red) so that $\mathrm{GC}(3k,0)=\mathrm{GC}(k,k)\circ \mathrm{GC}(1,1)$.
    
    Now assume that $l=m\neq0$. In this case the center of the triangle with vertices $(0,0)$, $(l,l)$ and $(-l,2l)$ is $(0,l)$. This is a vertex if $l$ is not
    a multiple of 3, and a face if it is. In any case, there are mirror axis through $(0,0)$ and $(0,l)$, through $(l,l)$ and $(0,l)$ and through $(-l,2l)$ and
    $(0,l)$. This is illustrated in Figure~\ref{fig:GC_bitruncation}. It follows that $\mathrm{GC}(l,l)$ consists of three copies of the GC-operation $\mathrm{GC}(l,0)$ 
    in such a way that  $\mathrm{GC}(l,l)=\mathrm{GC}(l,0)\circ \mathrm{GC}(1,1)$.
\end{proof}

\begin{figure}
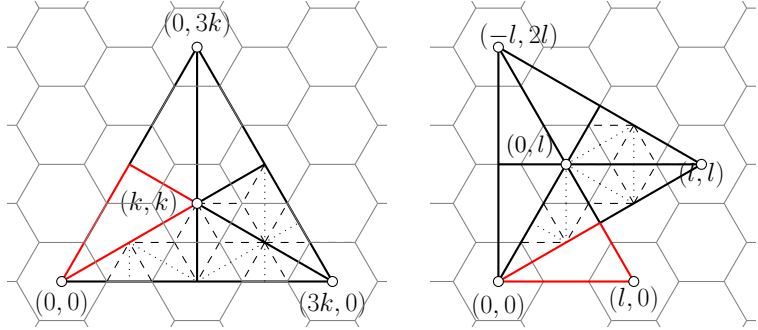

    \centering
    \scalebox{1}{\input{hexagontiling_m0_bitruncation.tikz}}
    \quad
    \scalebox{1}{\input{hexagontiling_mm_bitruncation.tikz}}
    \caption{These images show the symmetries mentioned in the proof of Lemma~\ref{lem:GC_decomposition}. The left image shows the case where $m=0$, and the right image shows the case where $l=m$.}
    \label{fig:GC_bitruncation}
\end{figure}

\begin{corollary}\label{cor:GC_v0colour2}
    Let $\mathrm{GC}(l,l)$ or $\mathrm{GC}(l,0)$ be a GC-operation where $v_0$ has colour 2. Then there exist polyhedral maps of genus $g$ for which
    $\mathrm{GC}(l,m)$ can increase the symmetry if and only if $g\neq 0$.
\end{corollary}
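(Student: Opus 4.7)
The plan is to prove the two directions separately, using the factorization in Lemma~\ref{lem:GC_decomposition} as the main structural tool. That lemma rewrites any GC-operation $O$ with $v_0$ of colour $2$ as $O = X \circ \mathrm{GC}(1,1)$, where $X = \mathrm{GC}(k,k)$ when $O = \mathrm{GC}(3k,0)$, and $X = \mathrm{GC}(l,0)$ when $O = \mathrm{GC}(l,l)$. Combined with Corollary~\ref{cor:bitruncation}, which already settles $\mathrm{GC}(1,1)$, this lets me handle both directions uniformly.

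For the direction $g\neq 0 \Rightarrow$ existence of an $O$-increasing map, I would split on genus. For $g=1$, Lemma~\ref{lem:GC_genus1} applies directly to any polyhedral map of genus $1$ with only hexagonal faces. For $g\geq 2$, I would use the polyhedral map $D(H_g)$: by Lemma~\ref{lem:truncationfrom1}, truncation increases the symmetry of $H_g$, and then part $1\Leftrightarrow 3$ of Lemma~\ref{lem:dual_operations} (with $O=T$) yields that $\mathrm{GC}(1,1) = T\circ D$ increases the symmetry of $D(H_g)$. Since applying any c3-lsp-operation can only enlarge the automorphism group, $O = X\circ \mathrm{GC}(1,1)$ then increases the symmetry of $D(H_g)$ as well.

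For the direction $g=0 \Rightarrow$ no increase, I would induct on the inflation factor of $O$. The base case is $O=\mathrm{GC}(1,1)$, which is exactly Corollary~\ref{cor:bitruncation}. For the inductive step, factor $O = X\circ \mathrm{GC}(1,1)$ and let $P$ be any plane polyhedron. By the base case, $Q := \mathrm{GC}(1,1)(P)$ is again a plane polyhedron with $|\mathrm{Aut}(Q)| = |\mathrm{Aut}(P)|$, so it suffices to show $X$ does not increase the symmetry of $Q$. If $X = \mathrm{GC}(l,0)$ with $l$ not a multiple of $3$, then $v_0$ of $X$ has colour $0$, and Lemma~\ref{lem:GC_colour0_allgenus} applies (since $Q$ has genus $0\neq 1$). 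Otherwise $X$ is itself a GC-operation with $v_0$ of colour $2$ but strictly smaller inflation factor than $O$, so the inductive hypothesis finishes the case.

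I do not expect a deep obstacle; the main care is in the induction bookkeeping: verifying that each application of the factorization $O = X\circ \mathrm{GC}(1,1)$ strictly decreases the inflation factor so that the induction terminates at $\mathrm{GC}(1,1)$, and correctly sorting the two subcases for $X = \mathrm{GC}(l,0)$ depending on whether $l$ is divisible by $3$.
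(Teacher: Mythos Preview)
Your proposal is correct and follows essentially the same approach as the paper: both arguments hinge on the factorization $O = X \circ \mathrm{GC}(1,1)$ from Lemma~\ref{lem:GC_decomposition}, use Corollary~\ref{cor:bitruncation} for the base case, and for $g=0$ induct on the inflation factor, splitting according to whether the $v_0$ of $X$ has colour $0$ (Lemma~\ref{lem:GC_colour0_allgenus}) or colour $2$ (inductive hypothesis). The only differences are organizational: the paper runs a single induction covering both directions at once, and for $g\neq 0$ it simply cites Corollary~\ref{cor:bitruncation} to get a polyhedral map of each positive genus on which $\mathrm{GC}(1,1)$ increases symmetry, rather than splitting into $g=1$ and $g\ge 2$ as you do --- your split is not wrong, just slightly more work than needed.
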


\begin{proof}
    We prove this by induction on the inflation factor. The smallest GC-operation with $v_0$ of colour 2 is $\mathrm{GC}(1,1)$. For this operation the result follows
    from Corollary~\ref{cor:bitruncation}. Now assume that the result is true for every GC-operation with $v_0$ of colour 2 and inflation factor at most $n-1$.
    
    Let $\mathrm{GC}(l,m)$ be a GC-operation with $l=m$ or $m=0$, with $v_0$ of colour 2, and with inflation factor $n$. By Lemma~\ref{lem:GC_decomposition} there
    exists a GC-operation $\mathrm{GC}(l',m')$ such that $\mathrm{GC}(l,m)=\mathrm{GC}(l',m')\circ \mathrm{GC}(1,1)$.

    The inflation factor of $\mathrm{GC}(1,1)$ is 3, so the inflation factor of $\mathrm{GC}(l',m')$ is $n/3 < n$.  There exist polyhedral maps of every
    genus $g>0$ for which $\mathrm{GC}(1,1)$ increases the symmetry. As $\mathrm{GC}(l',m')$ at least preserves the symmetry, the symmetry of these
    polyhedral maps is also increased by $\mathrm{GC}(l,m)$. $\mathrm{GC}(1,1)$ cannot increase symmetry in polyhedra, so it suffices to prove that
    $\mathrm{GC}(l',m')$ cannot increase symmetry in polyhedra. If the $v_0$ vertex of $\mathrm{GC}(l',m')$ is of colour 2, this follows by induction. If it
    is of colour 0, it follows from Lemma~\ref{lem:GC_colour0_allgenus}. This proves the corollary.
\end{proof}

\section{Operations with a small inflation factor}

In this part we answer the question of which operations can increase symmetry and, if so, on which genus they can do so for all c3-lsp-operations with inflation
factor at most 6. This includes all well-known Conway operations that preserve all symmetries.

The Conway polyhedron notation was introduced by John Conway \cite{conway2008symmetries} to denote polyhedra obtained from operations such as truncation or
dual. In this notation, a polyhedron is denoted by a capital letter (e.g. $T$ is a tetrahedron) and the operation applied to the polyhedron is denoted with a
lowercase letter (e.g. $t$ is truncation). For example, the truncated tetrahedron is denoted by $tT$.

The operations named by Conway and later by Hart, Rossiter and Levskaya can be described as c3-lsp-operations if they preserve all symmetries and as c3-lopsp-operations
(see \cite{lspgocox}) if they are only guaranteed to preserve orientation preserving symmetries.

\subsection{Ambo}
The Conway operation ambo plays a special role, as so far it is {\em essentially} the only operation known to be able to increase the symmetry of (plane) polyhedra. All
other known operations that do so are combinations of ambo.
Ambo applied to a polyhedron $P$ can be described as placing a vertex in the midpoint of every edge of $P$ and connecting two of these
vertices through the common face if the corresponding edges of $P$ are incident to the same vertex in $P$ and belong to the same face of $P$. This is equivalent to the
graph theoretical construction of the medial graph.  Ambo is depicted as a c3-lsp-operation in Figure~\ref{fig:DAT}.

It is known (see for example \cite{korbitmaps}) that ambo increases the symmetry of self-dual polyhedral maps and 
that it can only do so for self-dual maps.
In particular, the symmetry group of the
polyhedral map after the application of ambo will be twice as big as the original symmetry group, where a new symmetry is obtained by composing an old symmetry with a
mapping on the dual.  Since there exist self-dual polyhedra, ambo can increase symmetry in genus zero. For example, ambo of a tetrahedron is an octahedron.

To prove that ambo can increase symmetry in every genus, we looked for results stating that self-dual maps exist in every genus. Though this is probably
folklore, we found no such results in the literature. A construction for self-dual maps by Archdeacon is sketched in
\cite{archdeacon1992survey}. There it is said that the construction is described in more detail in another paper in preparation, which seems not to have appeared.
In general, the construction also does not guarantee that the result is a polyhedron, or even that it is connected.  In the rest of this
section, we prove the existence of self-dual polyhedral maps in every genus, using a special case of the construction by Archdeacon for genus $g\ge 2$.

\begin{theorem}
    There exist self-dual polyhedral maps in every genus.
\end{theorem}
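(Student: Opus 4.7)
The plan is to prove existence separately for genus $0$, genus $1$, and genus $g \geq 2$. Genus $0$ is immediate: the tetrahedron is self-dual, and more generally every pyramid over an $n$-gon (for $n \geq 3$) is a self-dual plane polyhedron, since the apex corresponds to the base face and the $n$ base vertices to the $n$ triangular side faces, with all incidences preserved. For genus $1$, I would exhibit an explicit self-dual polyhedral torus map — for example a suitably chosen quadrangulation of type $\{4,4\}_{p,p}$ — and check that it is $3$-connected with face-width at least $3$, so that it qualifies as polyhedral in the sense used here.

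For genus $g \geq 2$, I would invoke the special case of Archdeacon's construction mentioned just before the theorem. In outline, one builds the map from a base block equipped with an incidence-preserving involution that pairs up a distinguished set of vertices with a distinguished set of faces; handles are then attached along these pairs, which both raises the genus by a controlled amount and realises the involution as an isomorphism of the resulting map onto its dual. By varying the number of handles, every $g \geq 2$ can be attained. The construction can be made uniform in $g$, so that only a handful of routine verifications separate the cases.

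The main obstacle is that Archdeacon's general construction does not by itself guarantee polyhedrality; it need not even produce a connected graph. So the heart of the proof will be to choose the base block and the attaching data so that the resulting map is (i) connected, (ii) $3$-connected, and (iii) has face-width at least $3$, which by \cite{mohar1997face} is equivalent to polyhedrality. I expect to arrange this by starting from an already polyhedral base block and by performing the handle-attachments along vertex/face pairs that are mutually far apart in the block, so that no new $2$-cut and no short non-contractible cycle can be created. Once these three conditions are verified, self-duality is automatic from the symmetry built into the construction.
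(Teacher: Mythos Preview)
Your outline for genus $0$ and $1$ is fine and matches the paper's choices (tetrahedron; a $\{4,4\}$ torus quadrangulation). The gap is entirely in the $g\ge 2$ case. What you have written there is a plan, not a proof: you correctly identify that Archdeacon's construction does not by itself give polyhedrality, but you then defer everything to ``choose the base block and the attaching data carefully'' and ``routine verifications''. Those verifications \emph{are} the proof. Moreover, your description of the mechanism---an involution pairing vertices with faces, then ``handles attached along these pairs''---is not a well-specified operation on maps, and it is not what the paper actually does. Without a precise description of how a handle is attached along a vertex/face pair, there is no way to check $3$-connectivity, face-width $\ge 3$, or even that the result is a map; and ``vertex/face pairs that are mutually far apart'' is not a criterion that by itself rules out new $2$-cuts or short noncontractible curves.

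The paper's proof avoids all of this vagueness by being completely explicit. It writes down a concrete torus map $\mathcal{G}$ (a $5\times 5$ square grid with three squares replaced by a hexagon, a square and two pentagons), exhibits an isomorphic copy $\mathcal{G}'$ drawn as the dual, and then glues two copies of $\mathcal{G}$ along their hexagonal faces to get a genus-$2$ map; self-duality comes from the explicit labelling that matches $\mathcal{G}$ to $\mathcal{G}'$, and polyhedrality is checked directly on the resulting object. For higher genus it introduces a second block $\mathcal{H}$ with two hexagonal faces and chains copies $\mathcal{G}\cup\mathcal{H}\cup\cdots\cup\mathcal{H}\cup\mathcal{G}$. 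If you want your approach to succeed you need to be equally concrete: specify the base block, specify exactly what ``attach a handle'' means combinatorially, and then actually verify $3$-connectivity and face-width for the resulting family.
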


\begin{figure}
    \centering
     \scalebox{0.8}{\input{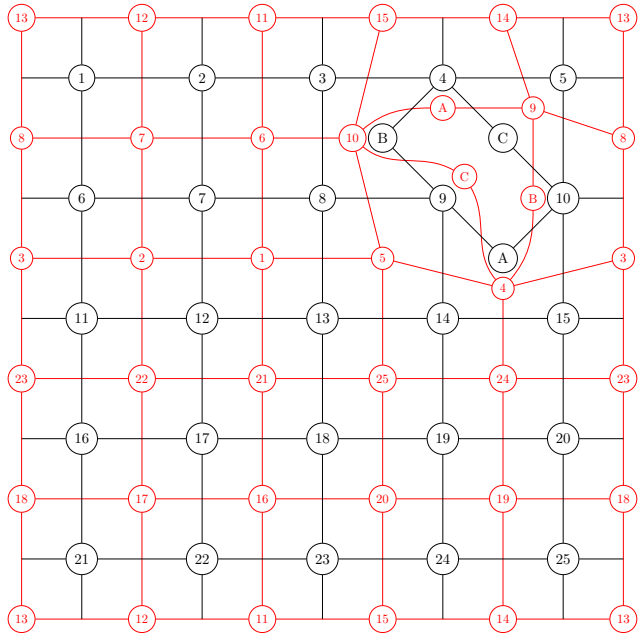}}
    \caption{The maps $\mathcal{G}$ (in black) and $\mathcal{G}'$ (in red) on the torus.}
    \label{fig: square tiling torus with 1 hexagon}
\end{figure}

\begin{figure}
    \centering
      \scalebox{0.8}{\input{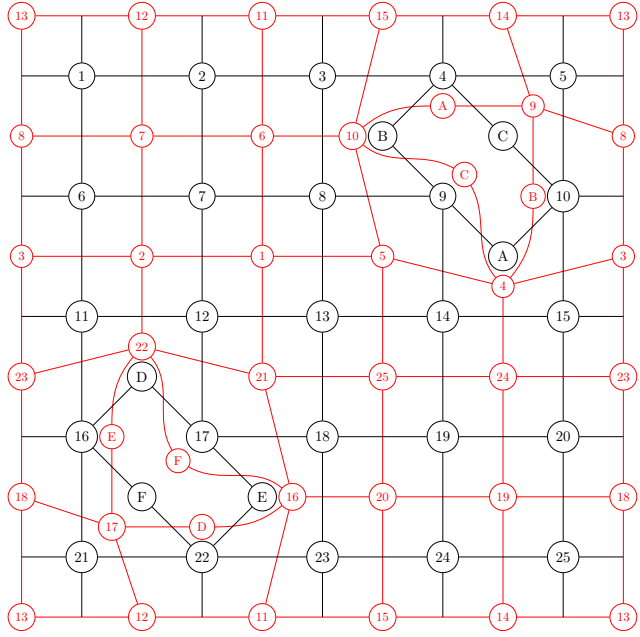}}
    \caption{The maps $\mathcal{H}$ (in black) and $\mathcal{H}'$ (in red) on the torus.}
    \label{fig: square tiling torus with 2 hexagons}
\end{figure}

\begin{proof}
    In genus 0, the tetrahedron is a self-dual polyhedron. In genus 1, the square tiling of a torus gives a self-dual polyhedral map.

    For genus 2, consider the square tiling of the torus with three squares replaced by a hexagon, a square, and two pentagons. This map $\mathcal{G}$ is depicted in black
    in Figure~\ref{fig: square tiling torus with 1 hexagon}. The tiling depicted in red in Figure~\ref{fig: square tiling torus
      with 1 hexagon} is a map isomorphic to $\mathcal{G}$, where the isomorphism $\psi$ is given by sending vertices with the same label to each other. Let
    $\mathcal{G}'$ be this isomorphic copy of $\mathcal{G}$. Gluing two copies of $\mathcal{G}$ together along the hexagonal face $F$, identifying vertices $A$
    with $10$, $B$ with $4$, and $C$ with $9$, yields a map $\mathcal{G} \cup_{\partial F} \mathcal{G}$ on an oriented surface of genus 2.

    Moreover, $\mathcal{G} \cup_{\partial F} \mathcal{G}$ is $3$-connected and all of its faces are closed 2-cells that intersect either
    in a vertex, in an edge, or not at all. Therefore, $\mathcal{G} \cup_{\partial F} \mathcal{G}$ is a polyhedral map.  In a similar fashion, we can construct
    $\mathcal{G}' \cup_{\partial F'} \mathcal{G}'$, gluing two copies of $\mathcal{G}'$ together along the boundary of the face $F'$ isomorphic to $F$ via
    $\psi^{-1}$, identifying the vertices as before.  Every face of $\mathcal{G} \cup_{\partial F} \mathcal{G}$ and $\mathcal{G}' \cup_{\partial F'}
    \mathcal{G}'$ is either a 4-gon or a 5-gon. Every vertex of $\mathcal{G} \cup_{\partial F} \mathcal{G}$ and $\mathcal{G}' \cup_{\partial F'} \mathcal{G}'$
    has degree 4 or 5.  In particular, $\mathcal{G} \cup_{\partial F} \mathcal{G}$ and $\mathcal{G}' \cup_{\partial F'} \mathcal{G}'$ are isomorphic via $\psi$ extended to the two copies.

    Notice that $\mathcal{G}' \cup_{\partial F'} \mathcal{G}'$ is dual to $\mathcal{G} \cup_{\partial F} \mathcal{G}$. Indeed, in Figure \ref{fig: square tiling
      torus with 1 hexagon}, we can see which vertices of $\mathcal{G}'$ correspond to which faces of $\mathcal{G}$. This correspondence is carried to
    $\mathcal{G}' \cup_{\partial F'} \mathcal{G}'$.  Thus, $\mathcal{G} \cup_{\partial F} \mathcal{G}$ is a self-dual polyhedral map.

    For genus 3, consider the square tiling of the torus with two triples of squares replaced. This map $\mathcal{H}$ is depicted in
    Figure~\ref{fig: square tiling torus with 2 hexagons}. The map $\mathcal{H'}$ depicted in red in Figure~\ref{fig: square tiling torus with 2 hexagons} is a map isomorphic to
    $\mathcal{H}$, where the isomorphism $\psi$ is given by sending vertices with the same label to each other. Gluing each hexagonal face $F_i$ to a different copy of $\mathcal{G}$, identifying the vertices $A-10, B-4, C-9$ for one face and
    $D-10, E-4, F-9$ for the other one, yields a map $\mathcal{G} \cup_{\partial F_1} \mathcal{H} \cup_{\partial F_2} \mathcal{G}$ on an oriented surface of genus
    3. In a similar fashion as before, $\mathcal{G} \cup_{\partial F_1} \mathcal{H} \cup_{\partial F_2} \mathcal{G}$ is a self-dual polyhedral map.

    For genus $g \ge 4,$ we consider $g-2$ copies of $\mathcal{H}$ and two copies of $\mathcal{G}$. We glue the hexagonal face of a copy of $\mathcal{G}$ to a
    hexagonal face of a copy of $\mathcal{H}$ in the same fashion described before. We then glue the second hexagonal face of $\mathcal{H}$ to the first
    hexagonal face of another copy of $\mathcal{H}$. We continue gluing copies of $\mathcal{H}$ together in this fashion. In the end, we glue the remaining
    hexagonal face of the last copy of $\mathcal{H}$ to the hexagonal face of the second copy of $\mathcal{G}$. This yields a map $\mathcal{G} \cup_{\partial
      F_1} \mathcal{H} \cup_{\partial F_2} \mathcal{H} \cup_{\partial F_3} \mathcal{H} \cup_{\partial F_4} \dots \cup_{\partial F_{g-2}} \mathcal{H}
    \cup_{\partial F_{g-1}} \mathcal{G}$, which is a self-dual polyhedral map on an oriented surface of genus $g$.
\end{proof}

\begin{corollary}\label{cor:ambo}
    There are polyhedral maps of every genus for which ambo increases the symmetry.
\end{corollary}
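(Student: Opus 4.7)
The plan is essentially to concatenate two ingredients that are already on the table: the theorem just proved (existence of self-dual polyhedral maps in every genus $g \ge 0$) and the known fact, mentioned at the beginning of this subsection and attributed to \cite{korbitmaps}, that ambo increases the symmetry of any self-dual polyhedral map (in fact it exactly doubles the automorphism group). So the proof will be just a couple of lines.

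First, I would invoke the preceding theorem to pick, for each $g \in \mathbb{N}$, a self-dual polyhedral map $P_g$ of genus $g$. Then I would cite the quoted fact that ambo increases symmetry on self-dual polyhedral maps, concluding that $|\mathrm{Aut}(A(P_g))|>|\mathrm{Aut}(P_g)|$. Since this works uniformly in $g$, the corollary follows.

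If a self-contained argument were preferred over a citation, the key observation is that $A(P) = A(D(P))$ as maps (applying ambo to $P$ and to its dual produces the same medial graph with the same embedding), so any duality isomorphism $\varphi : P \to D(P)$ gives an automorphism $\varphi_A$ of $A(P)$. This $\varphi_A$ swaps the two classes of $A$-chambers corresponding to the two vertices of colour $2$ in the operation $A$ (namely $v_0$ and $v_2$), hence by Lemma~\ref{lem:map_chamber_to_same_class} it is not induced by any automorphism of $P$, so $|\mathrm{Aut}(A(P))| > |\mathrm{Aut}(P)|$.

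There is no real obstacle left: all difficulty of the argument was absorbed into the preceding theorem constructing self-dual polyhedral maps of arbitrary genus.
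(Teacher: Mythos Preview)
Your proposal is correct and is exactly the (implicit) argument the paper intends: the corollary is stated without proof immediately after the existence theorem for self-dual polyhedral maps, relying on the already-cited fact from \cite{korbitmaps} that ambo increases the symmetry of any self-dual polyhedral map. Your optional self-contained justification via $A(P)=A(D(P))$ and Lemma~\ref{lem:map_chamber_to_same_class} is also fine and goes slightly beyond what the paper spells out.
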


For each c3-lsp-operation with inflation factor at most $6$ we will now give the set of genera on which the operation can increase the symmetry. The results are given in Table~\ref{tab:small_ops_to5} and Table~\ref{tab:small_ops_6}.
In each row, operations are given that are equivalent in the sense that each can be written as a product with the dual operation of any other. So due to Lemma~\ref{lem:dual_operations} it is sufficient to determine the set of genera
for an arbitrary of the four operations. As ambo can increase the symmetry in every genus and every c3-lsp-operation at least preserves symmetry, an operation that can be written as $O \circ A$ with a c3-lsp-operation $O$ can increase the symmetry in every genus.

{\small \begin{table}[th]
\begin{tabular}{|c|llll|c|}
    \hline
    Inflation & & & & & Can increase \\
    factor & $O$ & $D\circ O$ & $O\circ D$& $D\circ O \circ D$  & symmetry \\
    & & & & & in genus\\
    \hline
    1 & \scalebox{0.6}{\input{decos1.tikz}}& \scalebox{0.6}{\input{decos1_dO.tikz}}& & & $\emptyset$ \\
     & \textbf{Identity} & \textbf{Dual} & & &\\
    \hline
    2 & \scalebox{0.6}{\input{decos2.tikz}}& \scalebox{0.6}{\input{decos2_dO.tikz}}& & &$\mathbb{N}$ \\
    & \textbf{Ambo} & \textbf{Join} & & &\\
    \hline
    3 & \scalebox{0.6}{\input{decos3.tikz}}& \scalebox{0.6}{\input{decos3_dO.tikz}} & \scalebox{0.6}{\input{decos3_Od.tikz}}& \scalebox{0.6}{\input{decos3_dOd.tikz}}& $\mathbb{N}\setminus \{0\}$ \\
    & \textbf{Truncate} & \textbf{Needle} &\textbf{Zip} &\textbf{Kis} &\\
    \hline
    4 & \scalebox{0.6}{\input{decos4a.tikz}} &\scalebox{0.6}{\input{decos4a_dO.tikz}} & & & $\mathbb{N}$ \\
    & \textbf{Expand} & \textbf{Ortho} & & &\\
    \hline
    4 & \scalebox{0.6}{\input{decos4b.tikz}}& \scalebox{0.6}{\input{decos4b_dO.tikz}}& \scalebox{0.6}{\input{decos4b_Od.tikz}} &\scalebox{0.6}{\input{decos4b_dOd.tikz}} & $\{1\}$ \\
    & \textbf{Chamfer} &  & &\textbf{Subdivide} &\\
    \hline
    5 & \scalebox{0.6}{\input{decos5a.tikz}}& \scalebox{0.6}{\input{decos5a_dO.tikz}}& \scalebox{0.6}{\input{decos5a_Od.tikz}} &\scalebox{0.6}{\input{decos5a_dOd.tikz}}& $\emptyset$ \\
    &  &  & &\textbf{Loft} &\\
    \hline
    \end{tabular}
    \caption{All c3-lsp-operations with inflation factor at most 5. The third column gives the set of genera in which the operation can increase symmetry.} \label{tab:small_ops_to5}
\end{table}}

{\small \begin{table}[th]
\begin{tabular}{|c|llll|c|}
    \hline
    Inflation & & & & & Can increase \\
    factor & $O$ & $D\circ O$ & $O\circ D$& $D\circ O \circ D$  & symmetry \\
     & & & & & in genus\\
    \hline
    6 & \scalebox{0.6}{\input{decos6a.tikz}}& \scalebox{0.6}{\input{decos6a_dO.tikz}}& \scalebox{0.6}{\input{decos6a_Od.tikz}} &\scalebox{0.6}{\input{decos6a_dOd.tikz}}& $\emptyset$ \\
    & $O_{6a}$ &  & & &\\
    \hline
    6 & \scalebox{0.6}{\input{decos6b.tikz}} &\scalebox{0.6}{\input{decos6b_dO.tikz}} & & & $\mathbb{N}$ \\
    & $O_{6b}$ &  & & &\\
    \hline
    6 & \scalebox{0.6}{\input{decos6c.tikz}}& \scalebox{0.6}{\input{decos6c_dO.tikz}}& & & $\mathbb{N}$ \\
    & \textbf{Bevel} & \textbf{Meta} & & &\\
    \hline
    6 & \scalebox{0.6}{\input{decos6d.tikz}}& \scalebox{0.6}{\input{decos6d_dO.tikz}}& \scalebox{0.6}{\input{decos6d_Od.tikz}} &\scalebox{0.6}{\input{decos6d_dOd.tikz}}& $\emptyset$ \\
    & $O_{6d}$  &  & &\textbf{Join-lace} &\\
    \hline
    6 & \scalebox{0.6}{\input{decos6e.tikz}}& \scalebox{0.6}{\input{decos6e_dO.tikz}}& \scalebox{0.6}{\input{decos6e_Od.tikz}} &\scalebox{0.6}{\input{decos6e_dOd.tikz}}& $\mathbb{N}\setminus \{0\}$ \\
    & $O_{6e}$ &  & & &\\
    \hline
    6 & \scalebox{0.6}{\input{decos6f.tikz}}& \scalebox{0.6}{\input{decos6f_dO.tikz}}& \scalebox{0.6}{\input{decos6f_Od.tikz}} &\scalebox{0.6}{\input{decos6f_dOd.tikz}}& $\emptyset$ \\
    & \textbf{Quinto} &  & & &\\
    \hline
   \end{tabular}
    \caption{All c3-lsp-operations with inflation factor 6. The third column gives the set of genera in which the operation can increase symmetry.} \label{tab:small_ops_6}
\end{table}
}

    \begin{itemize}
        \item[1:] The identity operation can obviously not increase symmetry.
        \item[2:] The result for ambo is proven in Corollary~\ref{cor:ambo}.
        \item[3:] For truncation the result is exactly Corollary~\ref{cor:bitruncation}.
        \item[4a:] The operation {\em expand} can be written as $A \circ A$ and can therefore increase the symmetry in every genus. 
        \item[4b:] The operation {\em chamfer} is the operation $\mathrm{GC}(2,0)$, so it follows by Lemma~\ref{lem:GC_colour0_allgenus} and Lemma~\ref{lem:GC_genus1} that chamfer can increase symmetry only on genus $1$.
        \item[5:] We will prove the result for the operation {\em loft}. Let $P$ be a polyhedral map and $L(P)$ the polyhedral map obtained by applying loft to $P$. In $L(P)$ the vertices labeled $v_0$ double their degree from $P$, so
    they have degree at least 6, as $P$ is a polyhedral map. On the other hand, the new vertices introduced by the loft operation have degree 3. Therefore, any
    automorphism of $L(P)$ maps vertices of class $v_0$ to vertices of the same class.

    Vertices of class $v_0$ belong to two classes of chambers. One of them contains (half) an edge of $P$ that leads to a vertex of degree $3$ and one of them contains
    (half) an edge that leads to a vertex of degree at least $6$. So an automorphism can never map a chamber containing a vertex of class $v_0$ to a chamber in a different class
    and the result follows from Lemma~\ref{lem:map_chamber_to_same_class}.

  \item[6a:] Let $P$ be a polyhedral map. The vertices in $O_{6a}(P)$ have two different degrees -- once $3$ and once $6$.
    Vertices of class $v_2$ are the only colour $2$  vertices in the barycentric subdivision of $O_{6a}(P)$ that neighbour only colour $0$ vertices with degree $6$ in $O_{6a}(P)$. So any automorphism must map
    vertices of class $v_2$ onto vertices of class $v_2$ and as these vertices belong to only one class of chambers, the result follows with  Lemma~\ref{lem:map_chamber_to_same_class}.

        \item[6b:] $O_{6b}$ is $T \circ D \circ A$. It follows that $O_{6b}$ can increase symmetry in any genus. 

        \item[6c:] The operation {\em bevel} is $T \circ A$. It follows that bevel can increase symmetry in any genus.

        \item[6d:]  Let $P$ be a polyhedral map. The vertices in $O_{6d}(P)$ of class $v_0$ are the only vertices that are only contained in $4$-gons, as the faces corresponding to the vertex of class $v_2$ are at least hexagons.
          So vertices of class $v_0$ must be mapped on other vertices of class $v_0$
          by any automorphism. As they are only contained in chambers of the same class, the result follows from Lemma~\ref{lem:map_chamber_to_same_class}.

        \item[6e:] Operation $O_{6e}$ can be written as $A \circ T$, so it follows from Lemma~\ref{lem:truncationfrom1} that it can increase the symmetry in every genus higher than $0$.
           We still have to consider genus $0$. Let $P$ be a
           polyhedron. Truncation cannot increase the symmetry in the plane, but ambo can.  As truncation cannot increase the symmetry of a polyhedron, ambo must increase the symmetry of $T(P)$ if $A \circ T(P)$ has more symmetry than $P$,
           so $T(P)$ must be self-dual. As the result of truncation is -- no matter on which genus -- is always a 3-regular map that also contains faces of size at least $6$ (as already mentioned in the proof of
           Lemma~\ref{lem: truncation does not increase symmetry on plane}), the result of truncation is never self-dual and $O_{6e}= A \circ T$ never increases symmetry on genus $0$. Note that this argument implies that $O_{6e}$ increases symmetry for
           exactly the same polyhedral maps as truncation and by the same factor.
           
         \item[6f:] Let $Q(P)$ be the result of applying {\em quinto} to a polyhedral map. Then the vertices of class $v_0$ are the only vertices that neighbour only vertices of degree $4$. So the vertices of class $v_0$
           are mapped onto vertices of class $v_0$ by any automorphism of $Q(P)$ and as these vertices are only contained in chambers of the same class, it follows from  Lemma~\ref{lem:map_chamber_to_same_class}
           that quinto cannot increase the symmetry.
	\end{itemize}

\section{Future work}

The most captivating question is whether each c3-lsp-operation that can increase the symmetry of a polyhedron can be written as a product of another operation with ambo, so that ambo is essentially the only operation that can do it.
We have proven that
this is the case for all c3-lsp-operations with inflation factor up to $6$, but though the result could be extended to slightly larger inflation factors, it is still not known whether it is true in general.
Solving this question would again emphasize the special role ambo plays among all c3-lsp-operations as well as the special role of the plane among all surfaces.

When writing {\em a product of another operation with ambo} it says nothing about the order of operations. In fact so far we have only examples of operations that can increase the symmetry of polyhedra where
the product is of the form $O\circ A$ -- with $O$ an arbitrary operation. It is not clear whether operations can make self-dual polyhedra out of polyhedra that are not self-dual.
If that is true, also operations of the form $A\circ O$ with $O$ not being of the form $O'\circ A$ could increase the symmetry of polyhedra.

 In this text the arguments used for different operations differ from each other.
 In \cite{camp2023effect} a simple criterion is given to judge whether a given operation preserves 3-connectivity of maps.
 Such a general criterion that makes it easy to judge whether a given operation can increase symmetry (or the opposite:
cannot increase symmetry on certain genera) would be a useful achievement. For 3-connectivity it does not make a difference whether one studies the map or the
underlying graph, for other invariants it does. Some of the results proven in this article imply corresponding results for the automorphism group of the
underlying abstract graph of the map. E.g.\ for polyhedra the size of the automorphism group of the map is the same as for the abstract graph (a consequence of Whitney's unique embedding theorem \cite{Whitney_unique_embed}) --
for higher genus this is not necessarily the case and some of the results might not hold. Given a map $P$ and a c3-lsp-operation $O$, the genus of $O(P)$ is
obviously the same as that of $P$. But even if $P$ is a minimum genus embedding of the underlying abstract graph, so that the genus of the graph (defined as the
minimum genus in which it can be embedded) and that of the map coincide, the genus of the underlying graph of $O(P)$ is always the same for some operations and
can differ a lot for others. There is e.g. for each genus $g$ a map that is a minimum genus embedding of the underlying graph, but where the underlying graph of the dual is planar.

So there are many open problems for c3-lsp-operations and of course all these problems must also be posed for the more general class of c3-lopsp-operations that can destroy symmetries.
At least one of the problems is easier for the more general class: for c3-lopsp-operations it is well known
that they can increase the symmetry of polyhedra even if they are not a product with ambo. E.g.\ the c3-lopsp-operation {\em snub} applied to a tetrahedron produces an icosahedron.

\bibliographystyle{plain}
\bibliography{bibliography}

\end{document}